\documentclass[11pt,reqno,a4paper]{amsart}

\usepackage[margin=1.15in]{geometry} % Balances page count and beautiful margins
\usepackage{microtype} % Typographical perfection, reduces awkward hyphenation
\usepackage{amsmath, amssymb, amsthm}
\usepackage{mathrsfs}
\usepackage[hidelinks]{hyperref}
\usepackage{esint}
\usepackage{mathtools}
\usepackage{graphicx}

\makeatletter
\def\@tocline#1#2#3#4#5#6#7{\relax
  \ifnum #1>\c@tocdepth %
  \else
    \par \addpenalty\@secpenalty\addvspace{#2}%
    \begingroup \hyphenpenalty\@M
    \@ifempty{#4}{%
      \@tempdima\csname r@tocindent\number#1\endcsname\relax
    }{%
      \@tempdima#4\relax
    }%
    \parindent\z@ \leftskip#3\relax \advance\leftskip\@tempdima\relax
    \rightskip\@pnumwidth plus4em \parfillskip-\@pnumwidth
    #5\leavevmode\hskip-\@tempdima
    \ifcase #1
     \or\or \hskip 1em \or \hskip 2em \else \hskip 3em \fi%
    #6\nobreak\relax
    \dotfill\hbox to\@pnumwidth{\@tocpagenum{#7}}\par
    \nobreak
    \endgroup
  \fi}
\makeatother

\newtheorem{theorem}{Theorem}[section]
\newtheorem{lemma}[theorem]{Lemma}

\newtheorem{corollary}[theorem]{Corollary}

\theoremstyle{definition}
\newtheorem{definition}[theorem]{Definition}
\newtheorem{example}[theorem]{Example}

\theoremstyle{remark}
\newtheorem{remark}[theorem]{Remark}

\DeclareMathOperator{\im}{im}
\newcommand{\R}{\mathbb{R}}
\newcommand{\C}{\mathbb{C}}
\newcommand{\A}{\mathbb{A}}
\newcommand{\Q}{\mathbb{Q}}
\newcommand{\B}{\mathbb{B}}
\newcommand{\defeq}{\stackrel{\textnormal{def}}{=}}

\DeclareMathOperator{\Tan}{Tan}

\begin{document}

\title[Rigidity and Dimensional Estimates in $BV^\A$]{Complex-ellipticity, dimensional estimates \\
and plane wave rigidity in $BV^\A$}

\author{Adolfo Arroyo-Rabasa}
\address{\newline 
Dipartimento di Matematica \newline
Universit\`a di Pisa \newline
 Largo Bruno Pontecorvo 5 \newline
  Pisa 56127, Italy \newline
Email: \textnormal{\texttt{\href{mailto:adolfo.rabasa@unipi.it}{adolfo.rabasa@unipi.it}}}}

\date{\today}

\subjclass[2020]{Primary 49Q15, 28A75; Secondary 35E20, 46E35, 26B30}
\keywords{Complex-ellipticity, functions of bounded variation, geometric measure theory, PDE-constrained measures, plane wave rigidity, jump traces.}

\begin{abstract}
We characterize complex-elliptic operators $\A(D)$ through a hierarchy of overdeterminacy ($\ell$-vanishing) quantifying the structural twisting of their symbols. This framework yields the optimal dimensional estimate for $BV^\A$-functions: a measure $\A u$ on $n$-dimensions cannot concentrate on sets of dimension below $n-1$. Consequently, the jump part of $\A u$ is characterized as an $(n-1)$-dimensional surface measure with density given by the symbol and the two-sided traces as it occurs for $BV$-functions. Building on this dimensional bound, we prove that measures satisfying $\frac{\A u}{|\A u|} \in \mathrm{span}\{P_0\}$ precisely decompose into finite sums of one-dimensional $BV$ profiles. Ultimately, these results reveal that complex-ellipticity strictly enforces a plane-wave structure on tangent measures.
\end{abstract}

\maketitle
\tableofcontents

\section{Introduction}

The study of the fine properties of functions and measures satisfying linear PDE constraints remains a constant source of fundamental questions, largely driven by its pivotal role in geometric measure theory and the calculus of variations~\cite{Alberti1993,AK_2000,BB2007,breit,DR2016,federer1969,VS2013}. A central challenge in this field is to establish sharp dimensional estimates: given a constant-coefficient operator $\A(D)$ on $\R^n$, if $\A(D) u = \mu$ for a Radon measure $\mu$, how singular can $\mu$ be, and what is the minimal Hausdorff dimension of the sets supporting its distribution?~\cite{AR2020proc,ADHR2019, AS2025, Ayoush2023,Dobronravov2024,SS2026,Stolyarov2023}

Within the class of elliptic operators, the gradient sets the gold standard. De Giorgi's foundational theory of sets of finite perimeter guarantees that a gradient measure $\mu = Du$ in $\R^n$ cannot concentrate on sets of dimension below $n-1$~\cite{AFP2000,federer1969}. In stark contrast, general elliptic operators lack this geometric rigidity; the fundamental solution of the Cauchy--Riemann equations, for instance, readily allows measures to concentrate on purely $0$-dimensional points. Establishing codimension-one estimates therefore requires the operator to be inherently more ``gradient-like'' or strictly \emph{overdetermined}. 

Unlike square systems, strongly overdetermined systems impose internal algebraic compatibilities that obstruct lower-dimensional solutions. We capture this through a precise geometric rigidity property: an operator $\A(D)$ must force the intersection of its image spaces across any $\ell$-dimensional frequency plane to be strictly trivial. Specifically, its principal symbol $\A(\zeta)$ must satisfy the \emph{$\ell$-vanishing} condition (see Def.~\ref{def:ell} for the formal setup):
\[
    \bigcap_{V \in \mathrm{Gr}(n,\ell)} \,  \bigcup_{\zeta \in V} \im \A(\zeta) = \{0\},
\]
where $\mathrm{Gr}(n,\ell)$ denotes the Grassmannian manifold of $\ell$-planes in $\R^n$. This geometric property quantifies overdeterminacy not merely by a surplus of equations, but by the structural ``twisting'' of the operator's symbol across frequency space. We demonstrate that, within the class of elliptic operators, \emph{complex-ellipticity} (see Definition~\ref{def:CE}) represents the sharpest and most extreme manifestation of this geometric twisting (see Theorem~\ref{thm:main}).

Uncovering this precise geometric rigidity unlocks a refined understanding of the fine properties of $BV^\A$-functions. Most notably, it ensures through the results in~\cite{ADHR2019} that the associated measures cannot concentrate on lower-dimensional sets, establishing the optimal codimension-one dimensional estimate $|\A(D) u| \ll \mathscr{H}^{n-1}$ (see Corollary~\ref{cor:estimates}). This dimensional bound, in turn, allows us to improve upon the known theory of interior jump traces. While previous representation formulas for the singular measure were strictly confined to countably $\mathscr{H}^{n-1}$-rectifiable sets (i.e., countable unions of Lipschitz graphs)~\cite{AS2025}, our estimates rigorously extend the existence of Lebesgue-pointwise two-sided traces to arbitrary $\mathscr{H}^{n-1}$-rectifiable sets (see Corollary~\ref{cor:traces}). In particular, for first-order complex-elliptic operators, this guarantees that the lowest-dimensional singular component of $\A u$ is completely characterized by the jump part (see Corollary~\ref{cor:jump}):
\begin{align*}
    \A u \lfloor \{  \theta^{*(n-1)}(|\A u|) > 0\} & = \A^j u  \defeq \A u  \lfloor_{J_u} \\ 
    & = \A(\nu)(u^+ - u^-) \, \mathscr H^{n-1} \lfloor_{J_u},
\end{align*}
where $J_u$ is the jump set of $u$, $\nu$ is a Borel measure-theoretic normal to $J_u$, and $u^+, u^-$ are the corresponding two-sided traces of $u$ on $J_u$. 

Beyond fine properties, this structural rigidity has profound implications for the calculus of variations, particularly in the study of integral functionals with linear growth defined on $BV^\A$-spaces. The ensuing \emph{plane-wave decompositions} of tangent measures (see Lemma~\ref{lem:rigidity} and Corollary~\ref{cor:poly_tangents}) allow such variational problems to be locally reduced to purely one-dimensional profiles. Such structure theorems help bypass the need for Alberti's rank-one theorem~\cite{KR2010,Rindler2012}, but crucially are also fundamental for integral representation results via homogenization, and understanding the singular sets of strain fields in perfect plasticity and fracture mechanics~\cite{CVG2026,DPR2020}---where the relevant operators, such as the symmetric and deviatoric gradients, are precisely complex-elliptic.

\section{Set-up}
To properly frame this class and our main results, we first fix the underlying notation. Let $\Omega \subset \R^n$ be an open set and let $E,F$ be finite-dimensional inner product $\R$-vector spaces with dimensions $M,N \ge 1$ and bases $e_1,\dots,e_M$ and $f_1,\dots,f_N$ respectively. A linear homogeneous system $\A(D) :\mathscr D'(\R^n,E) \to \mathscr D'(\R^n,F)$ of order $k \ge 1$ acts on an $E$-valued distribution as
\begin{equation}\label{eq:A}
    \A u \coloneqq  \A(D)u  \, = \, \sum_{\substack{s=1,\dots,M\\j=1,\dots,N}} (a_{js}(D)\, u_s)f_j 
\end{equation}
where $u =u_1e_1 + \dots + u_Me_M$ and each $a_{js}(D)$ is a real-valued, $k$-th order homogeneous operator with constant coefficients $a_{js} \in \R[x_1,\dots,x_n]$. The principal symbol associated with $\A(D)$ is the polynomial map $\A : \R^n \to  \mathcal L(E,F)$ defined by $\A(\zeta) = (a_{js}(\zeta))$, where $\zeta \in \R^n$. Alternatively, it is common to write $\A(D)$ in (higher-order) divergence form:
\[
    \A u \,  = \sum_{|\alpha|=k} A_\alpha \partial_\alpha u,  \qquad A_\alpha \in \mathcal L(E,F),
\]
in which case
\[
    \A(\zeta) = \sum_{|\alpha| = k} A_\alpha \zeta^\alpha, \qquad  \zeta^\alpha \defeq \zeta_1^{\alpha_1} \cdot \cdots \cdot \zeta_n^{\alpha_n}.
\]

In order to define complex-ellipticity, we interchangeably consider $\A$ as a polynomial with real coefficients on $\C^n$. Throughout, we write $\zeta$ to denote vectors in $\R^n$ and reserve $\xi$ to denote complex vectors in $\C^n$. 

\begin{definition}[$\mathbb{K}$-elliptic]\label{def:CE} Let $\mathbb{K} \in \{\R,\C\}$.
An operator $\A(D)$ as in $\eqref{eq:A}$ is called \emph{$\mathbb{K}$-elliptic} if the family of all $M \times M$ sub-determinants (minors) of the symbol matrix $\A(\xi)$ has no common zero in $\mathbb{K}^n \smallsetminus \{0\}$. That is, for every frequency $\xi \in \mathbb{K}^n \smallsetminus \{0\}$, the map $\A(\xi) : \mathbb{K} \otimes E \to \mathbb{K} \otimes F$ has full column $\mathbb{K}$-rank $M$.
\end{definition} 

Notice that this definition inherently imposes a form of overdeterminacy ($M < N$) when $\mathbb{K} = \C$ due to the algebraic closure of the complex field. By homogeneity and the rank-nullity theorem, $\mathbb{K}$-ellipticity is strictly equivalent to the following coercivity property: there exists $c_\A > 0$ such that
\[
    |\A(\xi) e| \ge c_\A |\xi|^k|e| \quad  \text{for all } \xi \in \mathbb{K}^n \text{ and } e \in \mathbb{K} \otimes E.
\]

Smith~\cite{Smith61,Smith70} identified $\C$-ellipticity as the \emph{sufficient and necessary} condition behind the validity of coercive $L^p$-inequalities on domains, i.e., $\|Du\|_{p,\Omega} \lesssim \|u\|_{p,\Omega} + \|\A u\|_{p,\Omega}$ for $1 < p < \infty$. Furthermore, Breit, Diening, and Gmeineder \cite{breit} proved that $\C$-ellipticity is the \emph{unique} condition guaranteeing a bounded exterior trace $\| \mathrm{tr}_{\partial \Omega} u\|_{1,\partial \Omega} \lesssim \|u\|_{1,\Omega} + \|\A u\|_{1,\Omega}$---subsequently, it was demonstrated by the author and Skorobogatova that this exterior trace behaves precisely as the classical pointwise trace in the sense of Lebesgue~\cite{AS2025}. More recently, Borken's~\cite{Borken2025} outstanding generalization of the quantitative estimates by Friesecke, James, and M\"uller~\cite{FJM02} (see also~\cite{LLP2024}) revealed that $\C$-ellipticity plays a key structural role in the mechanisms governing the geometric rigidity of $\mathrm{SO}(n)$.

\medskip

In light of this robust analytic framework, it has been a long-standing conjecture that $\C$-elliptic constraints enforce $(n-1)$-dimensional rigidity (see, e.g., \cite{AS2025,AW2017,Raita}). We bridge this gap by characterizing $\C$-ellipticity through a hierarchy of geometric overdeterminacy, inspired by the \emph{wave} and \emph{image} cones introduced in~\cite{AR2021,ADHR2019}. 

\begin{definition}[Vanishing] \label{def:ell} Let $\ell \in \{1,\dots,n-1\}$ and let $\mathrm{Gr}(n,\ell)$ denote the Grassmannian manifold of $\ell$-subspaces of $\R^n$. An operator $\A(D)$ as in~\eqref{eq:A} is called \emph{vanishing on $\ell$-planes} (or simply \emph{$\ell$-vanishing}) if  its $\ell$-dimensional image cone is trivial, i.e.,  
\begin{equation}\label{eq:bundle}
    \mathcal I^{\ell}_\A \defeq \bigcap_{V \in \mathrm{Gr}(n,\ell)} \, \bigcup_{\zeta \in V}  \im \A(\zeta)  = \{0_F\}.
\end{equation}
\end{definition}

It is immediate to demonstrate that these cones define a geometric hierarchy
\[
    \mathcal I^1_\A \subset \cdots \subset \mathcal I^\ell_\A \subset \mathcal I^{\ell+1}_\A \subset \cdots \subset \mathcal I^{n-1}_\A.
\]
One can interpret this as a scale quantifying the geometric `twistedness' of an operator's image bundle; the higher the $\ell$, the stronger the vanishing condition. The lowest tier corresponding to lines, \emph{$1$-vanishing}, coincides precisely with Van Schaftingen's cancellation~\cite{VS2013}:
\[
	\bigcap_{\zeta \in \R^n \smallsetminus \{0\}} \im \A(\zeta) = \{0\},
\] 
which is a condition underpinning the endpoint Sobolev estimates of Bourgain and Brezis~\cite{BB2004, BB2007}. In general, $\ell$-vanishing is stronger than cancellation on $(\ell -1)$-planes:
\[
	\bigcap_{\zeta \in V \smallsetminus \{0\}} \im \A(\zeta) \subset \mathcal I^\ell_\A  \qquad \text{for all $V \in \mathrm{Gr}(n,\ell-1)$.}
\]
It is, however, weaker than \emph{strong $(\ell-1)$-cancellation}~\cite{SV2019}, which is equivalent to the admissibility of $\ell$-dimensional slicing for the associated measures~\cite{AR2020}.

\medskip

Our main algebraic result characterizes first-order complex-elliptic operators as the endpoint of the hierarchy introduced above, representing the maximal degree of structural torsion:
\begin{theorem}\label{thm:main} 
Let $\A(D)$ be a first-order operator as in~\eqref{eq:A}. The following are equivalent:
\begin{enumerate}
    \item[(i)] $\A(D)$ is $\C$-elliptic;
    \item[(ii)] $\A(D)$ is $\R$-elliptic and vanishing on hyperplanes;
    \item[(iii)] $\A(D)$ is $\R$-elliptic and there exists an integer $r \ge 1$ depending only on $\A$ such that    
    \[
    	\#  \{\zeta \in S^{n-1} : f \in  \im \A(\zeta) \} \le r \qquad \forall f \in F.
    \]
\end{enumerate} 
\end{theorem}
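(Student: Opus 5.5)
We prove (i)$\Rightarrow$(iii)$\Rightarrow$(ii)$\Rightarrow$(i). The step (ii)$\Rightarrow$(i) goes by contrapositive, using an explicit plane-wave computation that exploits first-order linearity. Throughout, $\A(\zeta)=\sum_i \zeta_i A_i$ is linear in $\zeta$. Note that $\C$-ellipticity trivially implies $\R$-ellipticity.

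\emph{(ii)$\Rightarrow$(i), by contrapositive.} Suppose $\A$ is $\R$-elliptic but not $\C$-elliptic.
\begin{itemize}
\item Pick $\xi=a+ib\neq 0$ and $e=v+iw\neq 0$ with $\A(\xi)e=0$. Separating real and imaginary parts gives $\A(a)v=\A(b)w$ and $\A(a)w=-\A(b)v$.
\item The vectors $a,b$ are linearly independent. Otherwise $\xi$ is a complex multiple of a real vector, and real full rank at that vector already excludes a complex kernel.
\item Set $f\defeq \A(a)v=\A(b)w$. Then $f\neq 0$: if $f=0$, $\R$-ellipticity forces $v=0$ (if $a\neq 0$) and $w=0$ (if $b\neq 0$), hence $e=0$.
\item For $\zeta_\theta=\cos\theta\, a+\sin\theta\, b$, a direct expansion using the two relations gives
\[
\A(\zeta_\theta)(\cos\theta\, v+\sin\theta\, w)=\cos^2\theta\, f+\sin^2\theta\, f+\cos\theta\sin\theta\,(\A(a)w+\A(b)v)=f .
\]
\end{itemize}
Hence $f\in\im\A(\zeta)$ for every $\zeta$ in the $2$-plane $\mathrm{span}\{a,b\}$. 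Every hyperplane meets this $2$-plane in a nonzero vector, so $0\neq f\in\mathcal I^{n-1}_\A$, contradicting (ii). The same computation shows that infinitely many points of $S^{n-1}$ satisfy $f\in\im\A(\zeta)$, so (iii) fails as well.

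\emph{(iii)$\Rightarrow$(ii).} Let $f\neq 0$. By (iii), the set $\{\zeta: f\in\im\A(\zeta)\}\smallsetminus\{0\}$ is contained in finitely many lines. Choose a hyperplane $V$ containing none of these lines; this is possible since each line excludes only a closed nowhere dense set of hyperplanes. On $V$, the only candidate is $\zeta=0$, but $\im\A(0)=\{0\}\not\ni f$. Thus $f\notin\mathcal I^{n-1}_\A$.

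\emph{(i)$\Rightarrow$(iii).} This is the main step. Fix $f\neq 0$ and consider the complex cone
\[
Z_f=\{\xi\in\C^n : \mathrm{rank}\,[\A(\xi)\,|\,f]\le M\}.
\]
It is cut out by the $(M+1)$-minors of the augmented matrix; these are polynomials of degree $\le M$ in $\xi$ and linear in $f$. By $\C$-ellipticity, for each $\xi\in Z_f\smallsetminus\{0\}$ there is a unique $e(\xi)$ with $\A(\xi)e(\xi)=f$. This $e(\xi)$ is given locally by Cramer's rule on a nonvanishing $M$-minor, so $\xi\mapsto e(\xi)$ is a regular map on $Z_f\smallsetminus\{0\}$. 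It is homogeneous of degree $-1$, because $e(\lambda\xi)=\lambda^{-1}e(\xi)$, and it never vanishes since $f\neq0$.

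Suppose the projectivization $\mathbb P Z_f$ contained an irreducible curve $C$. Then the components of $e$ would define a global section of $\mathcal O_C(-1)^{\oplus M}$ that is nowhere zero. This is impossible, because $\mathcal O(-1)$ has negative degree on a projective curve and so admits no nonzero global sections. Hence $\mathbb P Z_f$ is finite.

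To get a bound $r$ uniform in $f$, I would use a Bézout/Milnor–Thom type bound. A zero-dimensional variety cut out by polynomials of degree $\le M$ in $\C^{n}$ has at most $M^{n-1}$ projective points, and this bound does not depend on the linear parameter $f$. The real points on $S^{n-1}$ then number at most $r=2M^{n-1}$.

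The delicate part, which I expect to be the main obstacle, is making this vanishing-of-sections argument fully rigorous when $Z_f$ is possibly non-reduced or singular. I would handle it by restricting to the normalization of a curve component and pulling back $e$ there.
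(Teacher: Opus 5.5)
Your proposal is correct and closes the same cycle (i)$\Rightarrow$(iii)$\Rightarrow$(ii)$\Rightarrow$(i) as the paper. Your step (ii)$\Rightarrow$(i) is essentially the paper's Step~3: it uses the real $2$-plane spanned by $\mathrm{Re}\,\xi,\mathrm{Im}\,\xi$, on which a fixed $f\neq 0$ lies in every image. The other two steps take different routes.

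For (iii)$\Rightarrow$(ii), the paper argues by contradiction through Lemma~\ref{lem:crofton}, i.e.\ a Crofton-type inequality plus Whitney stratification, which produces a whole curve of characteristic directions for any $0\neq w\in\mathcal I^{n-1}_\A$. You simply choose a hyperplane avoiding the finitely many characteristic lines. Your argument is elementary and fully sufficient; the paper's lemma is a stronger structural statement that the cycle does not need.

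For (i)$\Rightarrow$(iii), both proofs use the same object: the unique solution $e(\xi)$ of $\A(\xi)e=f$, regular and homogeneous of degree $-1$ on the cone over a putative projective curve. The contradiction is extracted differently. The paper applies Hartogs' extension at the vertex of the two-dimensional cone, which on a possibly singular cone tacitly requires passing to the normalization. You use instead that $\mathcal O_C(-1)$ has no nonzero sections on a projective curve. Your version is purely algebraic.

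The technical worry you flag at the end dissolves without normalization. Take $C$ reduced and irreducible, and a linear form $\ell$ not vanishing identically on $C$. Then $\ell\,e$ has weight zero, so it descends to a regular map on $C$ and is therefore constant. It vanishes on $C\cap\{\ell=0\}\neq\emptyset$, so $e=0$ wherever $\ell\neq 0$, contradicting $\A(\xi)e(\xi)=f\neq 0$.

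You also make the uniform bound explicit: $r=2M^{n-1}$ via the refined B\'ezout inequality, since all $(M+1)$-minors of $[\A(\xi)\,|\,f]$ are homogeneous of degree $M$ in $\xi$, independently of $f$. The paper only appeals to B\'ezout qualitatively.

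Like the paper, you read (iii) for $f\neq 0$. This is necessary, since $0\in\im\A(\zeta)$ for every $\zeta$.
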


Theorem \ref{thm:main} reveals that complex-ellipticity is not an isolated algebraic condition, but the endpoint of a geometric scale. Every $\R$-elliptic and canceling operator sits somewhere on this spectrum: bounded below by $1$-vanishing (yielding endpoint Sobolev bounds), all the way to complex-ellipticity, where the bundle's twisting becomes so severe that the associated measures $\A u$ are forced to exhibit codimension-one rigidity (see below).

\medskip

A simple order-reduction principle~\cite{AS2025} (see Sect.~\ref{sec:higher}), preserving complex-ellipticity, transforms $\A(D)$ into a first-order operator $\mathbb{L}_\A(D)$ and structurally satisfies $\mathcal I^{n-1}_\A \subset \mathcal I^{n-1}_{\mathbb{L}_\A}$, therefore allowing us to identify the bundle property as a strictly \emph{necessary} condition for general complex-elliptic operators:

\begin{corollary}\label{cor:high1}
Every $\C$-elliptic operator is hyperplane vanishing.
\end{corollary}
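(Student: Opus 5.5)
The plan is to reduce to the first-order case and then quote Theorem~\ref{thm:main}, using the order-reduction principle of~\cite{AS2025} recalled in Sect.~\ref{sec:higher}. Let $\A(D)$ be $\C$-elliptic of order $k\ge 1$. If $k=1$, the implication (i)$\Rightarrow$(ii) of Theorem~\ref{thm:main} gives $\mathcal I^{n-1}_\A=\{0\}$ directly. For $k\ge 2$ we pass to the first-order operator $\mathbb L_\A(D)$, which acts on the jet $U=(u,Du,\dots,D^{k-1}u)$, or on a suitable quotient of it. Its rows are $\A u$ rewritten as a first-order expression in $D^{k-1}u$, together with the compatibility equations $\partial_i(D^{j}u)-(D^{j+1}u)_{i\cdot}=0$ and the symmetry constraints on $D^{j+1}u$ expressed through derivatives of $D^j u$. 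We arrange the target space as $F\oplus G$, with $F$ the original target of $\A$ and $G$ collecting the compatibility rows, so that $\mathbb L_\A(\zeta)$ has block form and the projection of $\im \mathbb L_\A(\zeta)$ onto the $F$-coordinate relates to $\im\A(\zeta)$.

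The first step is to check that $\mathbb L_\A$ is again $\C$-elliptic. Let $\xi\in\C^n\smallsetminus\{0\}$ and suppose $\mathbb L_\A(\xi)U=0$. The compatibility rows force $U$ to be the symbol-jet of a single vector, $U=(e,\xi\otimes e,\dots)$ up to the lower-order part that the rows kill, and then the $F$-row reduces to $\A(\xi)e=0$. Complex-ellipticity of $\A$ gives $e=0$, hence $U=0$. I take this step as given from~\cite{AS2025}, where the reduction is stated to preserve complex-ellipticity.

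The second step, the structural inclusion $\mathcal I^{n-1}_\A\subset\mathcal I^{n-1}_{\mathbb L_\A}$, is where the work lies. We embed $f\in F$ as $(f,0)\in F\oplus G$. Fix a hyperplane $V$ and suppose $f=\A(\zeta)e$ for some $\zeta\in V$. We then need $\zeta'\in V$ and $U$ with $\mathbb L_\A(\zeta')U=(f,0)$. The natural choice is $\zeta'=\zeta$ and $U=(0,\dots,0,\zeta^{\otimes(k-1)}\otimes e)$ in the top slot. The compatibility rows involving the top slot are then the symmetry constraints. Whether these vanish on such a symmetric tensor needs checking against the precise definition of $\mathbb L_\A$, and I would adjust the normalization of the top slot, for instance by symmetrizing, so that they do. In the case $\zeta=0$ we take $U=0$ as well, which gives $f=0$.

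With both steps in place the argument closes. Given $f\in\mathcal I^{n-1}_\A$, the vector $(f,0)$ lies in $\bigcup_{\zeta\in V}\im\mathbb L_\A(\zeta)$ for every $V$, so $(f,0)\in\mathcal I^{n-1}_{\mathbb L_\A}$. Theorem~\ref{thm:main} applied to the $\C$-elliptic first-order operator $\mathbb L_\A$ makes this cone trivial, so $f=0$. The main obstacle is the embedding in the second step, namely confirming that the top-slot jet satisfies all compatibility and symmetry rows, so that exactly $(f,0)$ is produced.
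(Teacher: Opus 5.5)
Your proposal is correct and follows the paper's proof: the paper likewise takes $\C$-ellipticity of $\mathbb{L}_\A$ from the order reduction of~\cite{AS2025}, and uses $\mathbb{L}(\zeta)(\zeta^{\otimes(k-1)}\otimes a)=(\A(\zeta)a,0)$ to obtain $\mathcal I^{n-1}_\A\times\{0\}\subset\mathcal I^{n-1}_{\mathbb{L}}$ before concluding with Theorem~\ref{thm:main}. The step you flag as the main obstacle needs no renormalization, since $\zeta^{\otimes(k-1)}\otimes e$ is already symmetric and curl-free; moreover, in the paper's formulation $\mathbb{L}_\A$ acts only on the top slot $\odot^{k-1}\R^n\otimes E$, so the lower jet components you introduce never arise.
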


For operators of higher order $(k \ge 2)$, the equivalence in Theorem~\ref{thm:main} may fail for vectorial operators ($M > 1$); in fact, it fails for $k = 2$ and $M=4$ (cf. Example~\ref{ex}). For higher-order operators, the exact geometric equivalence is captured by the hyperplane vanishing of the associated first-order reduction:

\begin{corollary}\label{cor:high}
    The following are equivalent:
\begin{enumerate}
    \item[(i)] $\A(D)$ is $\C$-elliptic;
    \item[(ii)] $\mathbb{L}_\A(D)$ is $\R$-elliptic and vanishing on hyperplanes.
\end{enumerate} 
\end{corollary}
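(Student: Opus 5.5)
The plan is to deduce Corollary~\ref{cor:high} from Theorem~\ref{thm:main} applied to the first-order reduction $\mathbb{L}_\A(D)$, together with the order-reduction principle of~\cite{AS2025} recalled in Sect.~\ref{sec:higher}. The one property of the reduction I need is that it \emph{preserves and reflects} complex-ellipticity: $\A(D)$ is $\C$-elliptic if and only if $\mathbb{L}_\A(D)$ is $\C$-elliptic. With that equivalence in hand, the corollary becomes the chain
\[
\A \text{ is } \C\text{-elliptic} \iff \mathbb{L}_\A \text{ is } \C\text{-elliptic} \iff \mathbb{L}_\A \text{ is } \R\text{-elliptic and hyperplane vanishing},
\]
where the second equivalence is exactly (i)$\Leftrightarrow$(ii) of Theorem~\ref{thm:main}, since $\mathbb{L}_\A$ has order one.

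The key step is therefore the two-sided equivalence of $\C$-ellipticity under reduction. I will use the standard construction of~\cite{AS2025}: $\mathbb{L}_\A$ acts on the jet $U=(\nabla^{k-1}u)$, or on a suitable finite collection of derivatives. Its components are $\A$ written in divergence form as a first-order operator on $\nabla^{k-1}u$, together with the compatibility (curl-type) equations $\partial_i(\nabla^{k-1}u)_{\beta}-\partial_j(\nabla^{k-1}u)_{\beta'}=0$ that say $U$ is a gradient field.

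Both directions work through the kernel at a frequency $\xi\in\C^n\smallsetminus\{0\}$.
\begin{itemize}
\item Suppose $\mathbb{L}_\A(\xi)U=0$. The compatibility part says that $U$ satisfies the rank-one, symmetric-tensor relations that force $U=\xi^{\otimes(k-1)}\otimes e$ for some $e\in\C\otimes E$. This holds over $\C$ because the relations $\xi_iU_{\beta+e_j}=\xi_jU_{\beta+e_i}$ have the same solution space over any field, provided $\xi\neq0$. The $\A$-part then reads $\A(\xi)e=0$. Hence $\C$-ellipticity of $\A$ gives $e=0$, so $U=0$.
\item Conversely, if $\A(\xi)e=0$ with $e\neq0$, then $U=\xi^{\otimes(k-1)}\otimes e$ is a nonzero element of $\ker\mathbb{L}_\A(\xi)$.
\end{itemize}

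The main obstacle is the precise form of $\mathbb{L}_\A$ fixed in Sect.~\ref{sec:higher}. I must check that its kernel over $\C$ consists exactly of the rank-one jets described above. In particular, I need to know whether the reduction uses full curl conditions or only those sufficient to recover $\A$, so that no spurious complex kernel appears. I would first write the algebraic identity $\ker_{\C}\mathbb{L}_\A(\xi)=\{\xi^{\otimes(k-1)}\otimes e:\A(\xi)e=0\}$ and verify it directly. If the paper's reduction already records that it ``preserves complex-ellipticity'' in both directions, I would simply cite that statement.

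Finally, I note that the inclusion $\mathcal I^{n-1}_\A\subset\mathcal I^{n-1}_{\mathbb{L}_\A}$ is not needed for this equivalence. It only yields Corollary~\ref{cor:high1}, and it explains why condition (ii) must be stated for $\mathbb{L}_\A$ rather than for $\A$, in view of Example~\ref{ex}.
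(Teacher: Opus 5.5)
Your proposal is correct and follows the paper's own proof: the paper likewise chains the equivalence \eqref{eq:eq} ($\A$ is $\C$-elliptic if and only if $\mathbb{L}_\A$ is) with (i)$\Leftrightarrow$(ii) of Theorem~\ref{thm:main}, applied to the first-order operator $\mathbb{L}_\A$. Your kernel identity $\ker_\C \mathbb{L}_\A(\xi)=\{\xi^{\otimes(k-1)}\otimes e:\A(\xi)e=0\}$ is a worthwhile addition, since the paper deduces \eqref{eq:eq} only ``in particular'' from \eqref{eq:red}, which by itself gives just the implication ($\mathbb{L}_\A$ $\C$-elliptic $\Rightarrow$ $\A$ $\C$-elliptic), whereas the converse needs your observation that the curl relations force every complex kernel element to be an exact jet.
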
 

\section{Applications}
To properly frame our dimensional estimates and rigidity results, we introduce the space of functions with bounded $\A$-variation. For an open set $\Omega \subset \R^n$, we define
\[
    BV^\A(\Omega, E) \defeq \big\{ u \in L^1(\Omega, E) : \A(D)u \in \mathcal{M}(\Omega, F) \big\},
\]
where $\mathcal{M}(\Omega, F)$ denotes the space of $F$-valued finite Radon measures on $\Omega$. This space is naturally equipped with the norm $\|u\|_{L^1(\Omega)} + |\A(D)u|(\Omega)$. We write $BV^\A_{loc}(\Omega, E)$ when the property holds on every relatively compact open subset.

\subsection{Fine properties}With the strict triviality of the wave cone established, the codimension-$1$ dimensional estimates follow as a direct consequence of the structure theorems for PDE-constrained measures studied in~\cite{ADHR2019}. Let us recall that the \emph{$\ell$-dimensional integral geometric measure} of a Borel set $A \subset \R^n$ is (see, e.g.,~\cite{federer1969,Mat}):
\[
    \mathscr I^\ell(A) \, \defeq \int_{\mathrm{Gr}(n,\ell)} \left( \int_V \mathscr H^0(A \cap P_V^{-1}\{a\}) \, d\mathscr H^\ell(a) \right) d\gamma_{n,\ell}(V),
\]
where $\mathrm{Gr}(n,\ell)$ is the Grassmannian manifold of $\ell$-dimensional subspaces of $\R^n$, endowed with its unique Haar measure $\gamma_{n,\ell}$ inherited from $\mathrm{O}(n,n-1)$, and $P_V$ denotes the canonical projection onto $V$. 

\begin{remark}
The integral geometric measure is naturally dominated by its Hausdorff counterpart: $\mathscr I^\ell(A) \le \mathscr H^\ell(A)$. Equality holds when $A$ is $\mathscr H^\ell$-rectifiable thanks to the Besicovitch--Federer projection theorem.
\end{remark}
 
We are now in a position to state the dimensional estimate:

\begin{corollary}\label{cor:estimates} 
Let $\A(D)$ be a complex-elliptic operator. For any $u \in BV^\A_{loc}(\Omega, E)$, its associated measure satisfies
\[
    |\A(D) u| \ll \mathscr I^{n-1} \ll \mathscr H^{n-1}.
\]
\end{corollary}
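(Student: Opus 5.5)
The plan is to derive the estimate from the structure theorem of~\cite{ADHR2019} for PDE-constrained measures, with the algebraic input supplied by Corollary~\ref{cor:high1}.

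We recall that result in the following form. Let $\mu \in \mathcal M(\Omega,F)$ satisfy a constant-rank-free linear constraint $\mathcal B(D)\mu = 0$. If the $\ell$-dimensional image/wave cone associated with the constraint is trivial, then $|\mu| \ll \mathscr I^{\ell}$. For measures of the form $\mu = \A(D)u$ the natural annihilator is the compatibility (exactness) operator $\mathcal B$ of $\A$, satisfying $\ker \mathcal B(\zeta) = \im \A(\zeta)$ for $\zeta \neq 0$. This holds when $\A$ is $\R$-elliptic, so that $\A$ has constant rank. Accordingly, the wave cone of $\mathcal B$ restricted to an $\ell$-plane $V$ is $\bigcup_{\zeta\in V}\ker\mathcal B(\zeta) = \bigcup_{\zeta \in V}\im\A(\zeta)$. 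The hypothesis of~\cite{ADHR2019} at level $\ell=n-1$ is then exactly $\mathcal I^{n-1}_\A = \{0\}$.

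The steps, in order, are as follows.

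First, by Corollary~\ref{cor:high1}, a $\C$-elliptic $\A$ is vanishing on hyperplanes. If we prefer to avoid exactness operators for higher order, we instead pass to the first-order reduction $\mathbb{L}_\A$. This reduction is $\C$-elliptic, so by Theorem~\ref{thm:main} it is $\R$-elliptic and satisfies $\mathcal I^{n-1}_{\mathbb L_\A}=\{0\}$. Moreover, $\A u$ is obtained from $\mathbb L_\A U$ for the associated lifted field $U$ by a linear isometric-type embedding, so $|\A u|$ and $|\mathbb L_\A U|$ are mutually absolutely continuous.

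Second, since $\R$-ellipticity implies constant rank, we take an exact annihilator $\mathcal B$ with $\mathcal B(D)\A(D)=0$ and $\ker\mathcal B(\zeta)=\im\A(\zeta)$. The existence of such a $\mathcal B$ is the constant-rank exactness result of Raiță. Then $\mu=\A u$ satisfies $\mathcal B(D)\mu=0$ in $\Omega$.

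Third, we apply the dimensional estimate of~\cite{ADHR2019}. There one shows, via tangent measures and integral-geometric slicing, that at $|\mu|$-almost every point where the upper $\ell$-density is positive, the polar $\frac{d\mu}{d|\mu|}$ lies in the $\ell$-dimensional cone. That cone is trivial here, which forces $|\mu|\ll\mathscr I^{n-1}$. The final inequality $\mathscr I^{n-1}\le\mathscr H^{n-1}$ is the Remark, which gives absolute continuity.

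The main obstacle is matching the formulation. The cone in~\cite{ADHR2019} is defined through the annihilator's wave cone $\bigcap_V\bigcup_{\zeta\in V}\ker\mathcal B(\zeta)$, possibly with complex or higher-order characteristics. We must check that this coincides with $\mathcal I^{n-1}_\A$. That identification requires the exactness $\ker\mathcal B(\zeta)=\im\A(\zeta)$ for every real $\zeta\neq0$, which is where $\R$-ellipticity (constant rank) is essential. The first thing to try is exactly this identification, localizing with a cutoff so that $u\in BV^\A_{loc}$ gives a finite measure on compactly contained subsets of $\Omega$.
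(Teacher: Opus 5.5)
Your route is the paper's: obtain $\mathcal I^{n-1}_\A=\{0\}$, then invoke \cite[Cor.~1.4]{ADHR2019}. Two of your choices are more precise than the paper's proof:
\begin{itemize}
\item You cite Corollary~\ref{cor:high1} rather than Theorem~\ref{thm:main}. That is the right reference, since the theorem is stated only for first-order operators while this corollary covers every order.
\item Your Step 2 makes explicit what the paper leaves implicit. The result of \cite{ADHR2019} is formulated for $\mathcal B$-free measures, so identifying their cone with $\mathcal I^{n-1}_\A$ needs an annihilator with $\ker\mathcal B(\zeta)=\im\A(\zeta)$ for all real $\zeta\neq 0$. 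Such an annihilator exists because $\R$-ellipticity gives constant rank. On the $\mathcal B$ side the union must run over $\zeta\in V\smallsetminus\{0\}$, since $\ker\mathcal B(0)=F$.
\end{itemize}

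What you must correct is your paraphrase of \cite{ADHR2019} in Step 3, which is false. It is not true that the polar lies in the $\ell$-cone at $|\mu|$-a.e.\ point where $\theta^{*\ell}(|\mu|,\cdot)>0$. Take $\A=D$ and $u=a\mathbf{1}_{\{x_1>0\}}$. Then $\mathcal I^{n-1}_D=\{0\}$, yet $|Du|=|a|\,\mathscr H^{n-1}\lfloor_{\{x_1=0\}}$ has positive $(n-1)$-density on its whole support. In the present setting your version would give $|\A u|(\{\theta^{*(n-1)}(|\A u|)>0\})=0$ for every $u$, i.e.\ $\A^j u=0$, which contradicts Corollary~\ref{cor:jump}.

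The statement actually proved there is about null sets: for every Borel $E$ with $\mathscr I^{\ell}(E)=0$, the polar $\frac{d\mu}{d|\mu|}$ lies in the $\ell$-cone at $|\mu|$-a.e.\ point of $E$. With a trivial cone this says $|\mu|(E)=0$, which is exactly $|\mu|\ll\mathscr I^{n-1}$. Quote it in that form, and your argument is complete.
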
 

This bound, combined with the methodologies developed in~\cite{AS2025}, yields a subtle, yet nontrivial, improvement regarding the explicit representation of measures $\A(D) u$ when restricted to rectifiable sets.

The setting is as follows: let $\Gamma \subset \Omega$ be an $\mathscr H^{n-1}$-rectifiable set equipped with a Borel measure-theoretic unit normal $\nu : \Gamma \to S_E$, where $S_E$ denotes the unit sphere in $E$. Following~\cite{AS2025}, we recall that any $u \in L^1_{\mathrm{loc}}(\Omega,E)$ for which $\A(D) u$ is represented by an $F$-valued Radon measure admits well-defined two-sided traces $u^+(x),u^-(x) \in E$ for $\mathscr H^{n-1}$-almost every $x \in \Gamma$. These traces satisfy one-sided Lebesgue point limits
\[
     \lim_{r \to 0} \fint_{B^\pm(x,r;\nu)} |u^\pm (x) - u(y)| \, dy = 0,
\]
where 
\[
B^\pm(x,r;\nu) = \{ y \in B(x,r) : \pm(y-x) \cdot \nu(x) > 0\}.
\] 

In~\cite{AS2025}, it was shown that if $\A(D)$ is complex-elliptic and $\Gamma$ is the image of a Lipschitz subset of $\R^{n-1}$ (and, by extension, on countably $\mathscr H^{n-1}$-rectifiable sets), then the restriction of $\A u$ to $\Gamma$ can be expressed as a surface measure with density $\A(\nu)(u^+ - u^-)$. 
We can now extend this representation to arbitrary $\mathscr H^{n-1}$-rectifiable sets.

\begin{corollary}\label{cor:traces}
Let $\A(D)$ be a first-order $\C$-elliptic operator and let $\Gamma \subset \Omega$ be an $\mathscr H^{n-1}$-rectifiable set with a Borel measure-theoretic normal $\nu : \Gamma \to S_E$. If $u \in BV^\A_{loc}(\Omega, E)$, then
\[
         \A u \, \lfloor_{\, \Gamma} \,  = \A(\nu)(u^+ - u^-) \, \mathscr H^{n-1} \, \lfloor_{\, \Gamma \cap J_u} \quad \text{as measures on } \Omega.
\]
In particular, the two-sided trace satisfies the bound
\[
    \| u^+ - u^- \|_{L^1(\Gamma; \mathscr H^{n-1})} \le c_\A^{-1} |\A u|(\Gamma).
\]
\end{corollary}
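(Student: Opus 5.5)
The plan is to reduce the general rectifiable case to the Lipschitz-graph case already established in~\cite{AS2025}, using Corollary~\ref{cor:estimates} to control the part of $\Gamma$ that the reduction misses.

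\textbf{Step 1: decomposition of $\Gamma$.} Since $\Gamma$ is $\mathscr H^{n-1}$-rectifiable, we may write
\[
\Gamma = N \cup \bigcup_{i} \Gamma_i ,
\]
where the $\Gamma_i$ are pairwise disjoint Borel subsets of $C^1$ (in particular Lipschitz) graphs, and $\mathscr H^{n-1}(N)=0$. By Corollary~\ref{cor:estimates} we have $|\A u|\ll \mathscr H^{n-1}$, so $|\A u|(N)=0$. This is the only place where the new dimensional estimate is used, but it is the crux: without it, $N$ could carry singular mass. Hence
\[
\A u\lfloor_\Gamma=\sum_i \A u\lfloor_{\Gamma_i},
\]
with the series converging in total variation, because $|\A u|$ is locally finite.

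\textbf{Step 2: identification on each piece.} On each $\Gamma_i$, which is a Borel subset of a Lipschitz image $\Sigma_i$, I invoke the result of~\cite{AS2025}:
\[
\A u\lfloor_{\Sigma_i}=\A(\nu_i)(u_i^+-u_i^-)\,\mathscr H^{n-1}\lfloor_{\Sigma_i}.
\]
Restricting this identity to $\Gamma_i$ requires two consistency checks.
\begin{itemize}
\item The normal $\nu_i$ of $\Sigma_i$ agrees with $\pm\nu$ at $\mathscr H^{n-1}$-a.e.\ point of $\Gamma_i$. This holds by uniqueness of approximate tangent planes of rectifiable sets, via the locality of approximate tangent planes under inclusion.
\item The traces $u_i^\pm$ coincide with $u^\pm$, with the labels swapped where $\nu_i=-\nu$. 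This follows because both are characterized by the one-sided Lebesgue limits on half-balls $B^\pm(x,r;\nu)$, which depend only on $x$ and $\nu(x)$. Since $\A(-\nu)=-\A(\nu)$ for first-order operators, the density $\A(\nu)(u^+-u^-)$ is invariant under a flip of orientation.
\end{itemize}
Summing over $i$ gives $\A u\lfloor_\Gamma=\A(\nu)(u^+-u^-)\mathscr H^{n-1}\lfloor_\Gamma$. Finally, the density vanishes exactly where $u^+=u^-$, by $\R$-ellipticity (which Theorem~\ref{thm:main} guarantees). This lets us restrict to $\Gamma\cap J_u$, with $J_u$ understood as the set of points where $u^+\neq u^-$.

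\textbf{Step 3: the bound.} The trace bound follows from the coercivity inequality $|\A(\nu)e|\ge c_\A|e|$ for $|\nu|=1$:
\[
|\A u|(\Gamma)=\int_{\Gamma}|\A(\nu)(u^+-u^-)|\,d\mathscr H^{n-1}\ge c_\A\|u^+-u^-\|_{L^1(\Gamma)}.
\]

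The main obstacle is the well-posedness of the traces on all of $\Gamma$, namely that the two-sided traces exist $\mathscr H^{n-1}$-a.e.\ on arbitrary rectifiable $\Gamma$ and are compatible with those on the graph pieces. I would handle this by the same covering: existence holds on each $\Gamma_i$ by~\cite{AS2025}, and the exceptional set is $\mathscr H^{n-1}$-null. Checking that the $\mathscr H^{n-1}$-null set $N$ carries no $\A u$ mass is precisely what Corollary~\ref{cor:estimates} provides.
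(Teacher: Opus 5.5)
Your proposal is correct and takes the same route as the paper: write $\Gamma$ as a countable union of Lipschitz pieces up to an $\mathscr H^{n-1}$-null set, apply \cite[Thm.~2.4]{AS2025} on each piece, and discard the null set using $|\A u| \ll \mathscr H^{n-1}$ from Corollary~\ref{cor:estimates}. The paper leaves the remaining details to the reader, and you supply them: the consistency of normals and traces, the orientation invariance of $\A(\nu)(u^+-u^-)$, and the coercivity argument for the trace bound.
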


\begin{remark}
A similar statement holds when $\A(D)$ is of higher order. The details, which can be retrieved directly from the proof of Corollary~\ref{cor:traces} and~\cite[Sect.~2.5]{AS2025}, are left to the reader. 
\end{remark}

To formally isolate the lowest-dimensional singular component of the measure, we recall the definition of the upper $(n-1)$-dimensional Hausdorff density. For a positive Radon measure $\mu \in \mathcal{M}(\Omega)$ and a point $x \in \Omega$, it is given by
\[
    \theta^{*(n-1)}(\mu, x) \defeq \limsup_{r \to 0^+} \frac{\mu(B_r(x))}{\omega_{n-1} r^{n-1}},
\]
where $\omega_{n-1}$ is the volume of the $(n-1)$-dimensional unit ball in $\R^{n-1}$. 

\begin{corollary}[Jump part characterization]\label{cor:jump}
    Let $\A(D)$ be a first-order $\C$-elliptic operator and let $u \in BV^\A(\Omega, E)$. Defining the jump measure as the restriction
    \[
        \A^j u \defeq \A u \lfloor \{x \in \Omega : \theta^{*(n-1)}(|\A u|, x) > 0\},
    \]
    and letting $\nu : J_u \to S_E$ be a Borel measure-theoretic normal of the jump set $J_u$, we have
    \[
        \A^j u = \A(\nu)(u^+ - u^-) \, \mathscr H^{n-1} \lfloor_{J_u},
    \]
    where $u^+,u^-$ are the two-sided Borel traces of $u$ on $J_u$ with respect to $\nu$.
\end{corollary}

\subsection{Plane-wave rigidity} Plane-wave decompositions and their profound connection to the structural rigidity of singular measures have become a cornerstone in the calculus of variations. The crucial role of such decompositions was first highlighted in the characterization of lower semicontinuous envelopes for integral functionals in $BV$ and $BD$ without relying on Alberti's rank-one theorem, as pioneered by Kristensen and Rindler~\cite{KR2010} and Rindler~\cite{Rindler2011,Rindler2012}. Building upon these foundational insights, explicit plane-wave decompositions were subsequently established for functions of bounded deformation ($BD$) by De Philippis and Rindler~\cite{DPR2020}, and more recently extended to the maps with bounded deviatoric strain ($\mathrm{BD}^{\text{dev}}$) by Caroccia and Van Goethem~\cite{CVG2026}. Before stating our general rigidity results for $\C$-elliptic operators, we formally define the functional classes of plane waves that naturally arise from constant-coefficient annihilators.

\begin{definition}[Plane waves]\label{def:plane_waves}
Let $\Omega \subset \R^n$ be an open set. To rigorously define distributions with one-dimensional profiles, we fix a direction $\zeta \in S^{n-1}$ and choose an orthogonal rotation $R_\zeta \in \mathrm{O}(n)$ such that $R_\zeta e_1 = \zeta$. This rotation induces a coordinate splitting $\R^n \cong \R \times \R^{n-1}$ where any point $x \in \R^n$ is uniquely expressed as $x = R_\zeta(t, y)$ with $t = x \cdot \zeta \in \R$ and $y \in \R^{n-1}$. We denote the corresponding rotated domain by $\Omega_\zeta \defeq R_\zeta^{-1}(\Omega)$. Naturally, if $\Omega = B_\rho$ is a ball centered at the origin, the domain remains invariant under this rotation, yielding $\Omega_\zeta = B_\rho$.

Given a \emph{profile} 1D distribution $g \in \mathscr D'(\R, E)$ and a scalar distribution $h \in \mathscr D'(\R^{n-1})$, their tensor product $g \otimes h$ is canonically defined on the split space $\R \times \R^{n-1}$. A distribution $u \in \mathscr D'(\Omega, E)$ is called a \emph{plane wave} if there exists a direction $\zeta \in S^{n-1}$ and a 1D distribution (profile) $g \in \mathscr D'(\R, E)$ such that $u$ is the pushforward from the rotated domain:
\[
    u  = (R_\zeta)_\# (g \otimes \mathscr L^{n-1}) \lfloor_\Omega.  
\]
Equivalently, the pullback distribution $R_\zeta^* u$ coincides with $g \otimes \mathscr L^{n-1}$ exactly on $\Omega_\zeta$. 

We say $u$ is a \emph{polynomial plane wave} of degree $M$ if it can be written as a finite sum
\[
    u = \sum_{\beta} (R_\zeta)_\# \big(g_\beta \otimes (q_\beta \mathscr L^{n-1})\big) \lfloor_\Omega,
\]
where $q_\beta$ are scalar polynomials on $\R^{n-1}$ of degree at most $M$, and $g_\beta \in \mathscr D'(\R, E)$. For brevity, when such a distribution is represented by a regular $L^1_{loc}(\Omega)$ function, we will formally denote it as $u(x) = \sum_\beta q_\beta(y_{\zeta}) g_\beta(x \cdot \zeta)$, where $y_\zeta \in \R^{n-1}$ are the corresponding transverse coordinates to $\zeta$.
\end{definition}

\begin{lemma}[Rigidity]\label{lem:rigidity} 
	Let $\A(D)$ be a first-order complex elliptic operator as in~\eqref{eq:A} and let $\Omega \subset \R^n$ be an open and convex set. There exist integers $m,r \ge 1$ depending only on  the symbol $\A(\zeta)$ with the following property: if $u \in \mathscr D'(\Omega,E)$ satisfies
	\begin{equation}\label{eq:rig}
		\A(D) u = P \sigma  
	\end{equation}
	for some vector $P \in F$ and some signed measure $\sigma \in \mathcal M(\Omega)$, 
	then $\sigma$ and $u$ can be decomposed into a sum of at most $r$ transversally polynomial plane waves with distinct characteristic directions $\zeta_1,\dots,\zeta_r \in S^{n-1}$. Specifically:
	\[
	\sigma(x) = \sum_{j=1}^r \sigma_j(x), \qquad u(x) = Q_{m-1}(x) + \sum_{j=1}^r u_j(x),
	\]
    where $Q_{m-1}$ is a polynomial of degree at most $m-1$, each $\sigma_j$ is a scalar transversally polynomial plane wave measure, and each $u_j$ is an $E$-valued transversally polynomial plane wave with $BV_{loc}$ profiles along $\zeta_j$.
\end{lemma}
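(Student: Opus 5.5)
The plan is to push the constraint $\A(D)u = P\sigma$ onto the scalar measure $\sigma$ through an annihilator of $\A$, classify the scalar solutions by an algebraic (Ehrenpreis--Palamodov type) argument, and then reconstruct $u$ by solving one-dimensional ODE systems along each characteristic direction.

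\emph{Step 1 (annihilator and characteristic set).} Since $\A(D)$ is $\C$-elliptic, $\A(\xi)$ has constant rank $M$ on $\C^n\smallsetminus\{0\}$. I will use the standard fact that this yields a constant-coefficient operator $\mathbb B(D)$ with $\ker \mathbb B(\xi)=\im \A(\xi)$ for all $\xi\in\C^n\smallsetminus\{0\}$. I am assuming this exact-annihilator result rather than reproving it, and I have not checked it in full generality; it is the natural source of $\mathbb B$. Applying $\mathbb B(D)$ to~\eqref{eq:rig} gives the scalar system
\[
b_i(D)\sigma=0, \qquad b_i(\xi)\defeq \langle \mathbb B(\xi)P, f_i\rangle .
\]
The common real zeros of the $b_i$ are exactly the $\zeta$ with $P\in\im\A(\zeta)$. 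By Theorem~\ref{thm:main}(iii) these form at most $r$ lines $\R\zeta_1,\dots,\R\zeta_r$. I must also control the complex variety $V_\C=\{\xi\in\C^n : b_i(\xi)=0\ \forall i\}$. The claim I would aim for is that $V_\C$ consists only of the complex lines $\C\zeta_j$. The intended route is to use $\C$-ellipticity together with homogeneity, along the lines of the proof of (i)$\Rightarrow$(iii): a complex non-real zero would produce a two-dimensional real family of directions along which $P$ lies in the image. Granting this, the Nullstellensatz gives an integer $m$, depending only on $\A$, such that every polynomial vanishing on $\bigcup_j \C\zeta_j$ has its $m$-th power in $I=(b_1,\dots,b_N)$.

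\emph{Step 2 (decomposition of $\sigma$).} Write $I_j$ for the ideal of the line $\C\zeta_j$, generated by the linear forms $\xi\cdot w$ with $w\perp\zeta_j$. For each $j$, pick a polynomial $p_j$ vanishing on $\bigcup_{k\neq j}\C\zeta_k$ but not on $\zeta_j$. Then:
\begin{itemize}
\item[(a)] $\sigma_j\defeq p_j(D)^m$-type combinations of $\sigma$ satisfy $(w\cdot D)^m\sigma_j=0$ for all $w\perp\zeta_j$, since products involving $I_j^m$ fall into $I$.
\item[(b)] By convexity of $\Omega$, this makes $\sigma_j$ a polynomial in the transverse variables $y_{\zeta_j}$ of degree $<m$ with distributional profiles in $x\cdot\zeta_j$.
\end{itemize}
To recover $\sigma$ itself rather than derivatives of it, I would run a partition-of-unity argument in the quotient ring $\C[\xi]/I$. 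The ideals $I_j^m$ are not coprime, since all lines meet at the origin. The defect is an ideal supported at $0$, and it contributes only polynomial solutions of bounded degree. I will therefore aim for
\[
\sigma=\sum_j \sigma_j + (\text{polynomial}),
\]
where the polynomial is absorbed into the plane waves because $\sigma$ is a measure, and the profiles of the $\sigma_j$ are measures. I would make this rigorous via the Ehrenpreis--Palamodov fundamental principle on convex domains, which represents solutions as superpositions of exponential-polynomials over $V_\C$.

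\emph{Step 3 (reconstruction of $u$).} For each $j$, solve $\A(D)u_j=P\sigma_j$ with $u_j$ a transversally polynomial plane wave along $\zeta_j$. Taking $P=\A(\zeta_j)a_j$ reduces this to one-dimensional ODE systems in $t=x\cdot\zeta_j$. The transverse polynomial terms are handled by descending induction on degree using the left inverse of $\A(\zeta_j)$, which exists by $\R$-ellipticity. Integrating a measure profile once yields $BV_{loc}$ profiles. The remainder $u-\sum_j u_j$ lies in $\ker\A(D)$ on the convex domain $\Omega$, and $\C$-ellipticity forces this kernel to consist of polynomials of degree $\le m-1$ (enlarging $m$ if needed), which gives $Q_{m-1}$.

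The main obstacle is Step 2: proving the complex variety has no extra components, and handling the embedded component at the origin so that $\sigma$ itself, not just $p(D)\sigma$, splits into plane waves with measure profiles.
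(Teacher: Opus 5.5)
Your Step~1 needs every complex line of $V_{\C}=\{\xi\in\C^n : P\in\im\A(\xi)\}$ to be the complexification $\C\zeta_j$ of a real line. This is a genuine gap, and it cannot be closed. The argument of (i)$\Rightarrow$(iii) in Theorem~\ref{thm:main} only shows that $V_{\C}$ is a finite union of complex lines through $0$; it says nothing about whether those lines are real. The implication you propose (a non-real zero forces a two-dimensional real family of characteristic directions) is false, and so is the lemma without it.

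Take the deviatoric gradient $\mathcal E^D$ on $\R^3$, which is $\C$-elliptic, and $P=e_1\odot e_2$. By the signature argument of Section~\ref{sec:examples}, the real characteristic set is only $\R e_1\cup\R e_2$. Yet $\xi=(1,-1,i)$ and $b=-\tfrac12(1,-1,-i)$ satisfy $b\odot\xi-\tfrac13(b\cdot\xi)I_3=P$. Hence $u=\operatorname{Re}\big(b\,e^{\xi\cdot x}\big)$ is smooth and $\mathcal E^D u=P\sigma$ with $\sigma=e^{x_1-x_2}\cos x_3$, a finite measure on every ball.

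This $\sigma$ is not a finite sum of transversally polynomial plane waves along real directions. Given $\zeta_1,\dots,\zeta_s\in S^{2}$ and a degree bound $N$, choose real $w_j\perp\zeta_j$ with $w_j\cdot\xi\neq0$; this is possible because the real $w$ with $w\cdot\xi=0$ form only the line $\R(1,1,0)$. Then $\prod_j(w_j\cdot\nabla)^N$ annihilates every such sum. But it maps $\sigma$ to $\operatorname{Re}(c\,e^{\xi\cdot x})$ with $c=\prod_j(w_j\cdot\xi)^N\neq0$, which vanishes on no open set. The symmetric $2$-tensor gradient with $P\leftrightarrow x_1(x_1^2+x_2^2)$ and $\xi=(1,i)$ gives the same phenomenon in $\R^2$.

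The paper's proof does not get around this either: it simply stipulates, ``for physical relevance'', that the lines $\ell_j$ are spanned by real vectors. That must be an additional hypothesis. It does hold, e.g., for $D$ and $\mathcal E$, where unique factorization over $\C$ forces every complex characteristic line to be real.

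Granting real lines, your use of the fundamental principle matches the paper's Step~1. Your remark that the embedded primary component at the origin only contributes polynomials is correct, and the paper's primary decomposition passes over it. Two further steps, however, are unsupported.

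First, in Step~2 you assert that the profiles of the $\sigma_j$ are measures. The fundamental principle only produces distributional profiles, and the fact that $\sum_j\sigma_j$ is a measure does not by itself make each summand one. The missing ingredient is the paper's transversal slicing. Take $\psi_\beta\in C^\infty_c(\R^{n-1})$ dual to the monomials, $\int y^\alpha\psi_\beta\,dy=\delta_{\alpha\beta}$, and pair $\sigma$ with $\varphi(x\cdot\zeta_1)\psi_\beta(y_{\zeta_1})$. The $\zeta_1$-component returns exactly $g_{1,\beta}$. Each component along $\zeta_j\neq\pm\zeta_1$, and any polynomial remainder, contributes a $C^\infty$ function of $t$. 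So $g_{1,\beta}$ is a measure minus a smooth function.

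Second, in Step~3 you construct $u_j$ by solving $\A(D)u_j=P\sigma_j$ within plane waves. Below the top transverse degree the equations $\A(\zeta_j)v_\beta'=(\cdots)$ are overdetermined, and you never check that their right-hand sides lie in $\im\A(\zeta_j)$; a left inverse does not supply this. The paper avoids the existence question by applying Smith's identity $\nabla^m=H(D)\A(D)$ to the given $u$. Then $u-Q_{m-1}$ is already a sum of plane waves along the $\zeta_j$, and only the $BV_{loc}$ regularity of its profiles must be extracted, by exactly the downward induction you describe.
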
 

\begin{corollary}
    Let $\Omega \subset \R^n$ be an open and convex set and let $u \in \mathscr D'(\Omega,E)$ satisfy the differential constraint
    \begin{equation*}
		\A(D) u = P \sigma  
	\end{equation*}
	in the sense of distributions for some vector $P \in F$ and some signed (finite) measure $\sigma \in \mathcal M(\Omega)$. Then $u \in BV(\Omega,E)$. 
\end{corollary}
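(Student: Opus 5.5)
The corollary should follow directly from the Rigidity Lemma~\ref{lem:rigidity}, plus local integrability of the plane-wave pieces. First, a caveat on the standing assumptions. The corollary is stated without repeating them, but I read it under those of Lemma~\ref{lem:rigidity}: $\A(D)$ is first-order and $\C$-elliptic, and $\Omega$ is convex. I will also assume $\Omega$ is bounded, or else interpret the conclusion as $u\in BV_{loc}(\Omega,E)$; otherwise the polynomial part $Q_{m-1}$ need not be integrable. With these conventions, apply Lemma~\ref{lem:rigidity} to obtain
\[
u = Q_{m-1} + \sum_{j=1}^r u_j,\qquad \sigma=\sum_{j=1}^r\sigma_j,
\]
where each $u_j(x)=\sum_\beta q_{j,\beta}(y_{\zeta_j})\,g_{j,\beta}(x\cdot\zeta_j)$ has $BV_{loc}$ profiles $g_{j,\beta}$ and polynomial transverse factors.

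The next step is to show that each summand lies in $BV$ on the relevant region. The polynomial $Q_{m-1}$ is smooth, so it belongs to $BV$ on bounded sets. For a term $q(y)g(t)$ in rotated coordinates, the product rule gives
\[
D\big(q\,g\big)= q(y)\,Dg(t)\otimes \zeta + g(t)\,\nabla_y q(y),
\]
understood as a tensor product of a measure in $t$ with smooth functions in $y$. On a bounded convex $\Omega$, the $t$-projection of $\Omega$ is a bounded interval $I_j$. Since $g\in BV_{loc}$, it is in $BV$ on compact subintervals, and $q,\nabla q$ are bounded on the $y$-projection. Fubini then gives total variation bounded by $C\,|Dg|(I_j')+C\|g\|_{L^1(I_j')}$ on any compactly contained subdomain. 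This yields $u\in BV_{loc}(\Omega,E)$.

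The main obstacle is upgrading to the global statement $u\in BV(\Omega,E)$, because $BV_{loc}$ profiles may blow up at the ends of $I_j$. I would first try to use finiteness of $\sigma$. Each $\sigma_j$ is a plane-wave measure with profile $\mu_j$ in $t$. Because the characteristic directions $\zeta_j$ are distinct, the measures $\sigma_j$ are mutually "transversal", and finiteness of $|\sigma|(\Omega)$ should force each $|\sigma_j|(\Omega)<\infty$. To see this, test against functions concentrated near level sets $\{x\cdot\zeta_j=t\}$; slabs in distinct directions meet $\sigma_i$, $i\neq j$, only in small volume. Next, read off from $\A(D)u_j = P\sigma_j$ (modulo lower-order polynomial terms) the ODE $\A(\zeta_j)g_j' = P\mu_j$ for the profile. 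Injectivity of $\A(\zeta_j)$, i.e.\ $\R$-ellipticity, then gives $|Dg_j|\le c_\A^{-1}|P||\mu_j|$, a finite measure on $I_j$, so $g_j\in BV(I_j)$. Integrating against the bounded polynomial weights gives $|Du|(\Omega)<\infty$ and $u\in L^1(\Omega)$. If the transversality separation turns out too delicate, I would settle for the $BV_{loc}$ conclusion obtained in the previous step.
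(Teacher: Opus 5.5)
Your reduction is the route the paper intends. The corollary has no separate proof in the paper; it is read off from Lemma~\ref{lem:rigidity}. Your product-rule/Fubini computation correctly turns the decomposition $u=Q_{m-1}+\sum_j u_j$, with $BV_{loc}$ profiles, into $u\in BV_{loc}(\Omega,E)$. Boundedness of $\Omega$ is not needed for this, because a compact subset of $\Omega$ projects onto a compact subinterval of each open interval $\{x\cdot\zeta_j : x\in\Omega\}$. That is also all the paper's argument gives: the lemma only yields $BV_{loc}$ profiles, and nothing in the paper passes to $BV(\Omega,E)$. Your caveat is justified, since the literal claim fails on unbounded convex sets (take $\A=D$, $u\equiv c\neq 0$ on $\R^n$, $\sigma=0$).

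Your global upgrade, however, has genuine gaps.

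First, the claim that $|\sigma|(\Omega)<\infty$ forces $|\sigma_j|(\Omega)<\infty$ is only a heuristic. The difficulty sits at the ends of $I_j$, i.e.\ at $\partial\Omega$. Near a boundary point that is extremal for two characteristic directions at once (a vertex, say), both profiles may blow up simultaneously, so ``small volume'' gives no smallness of $|\sigma_i|$-mass. You would have to rule out cancellation, e.g.\ by splitting off the parts of the profiles singular with respect to $\mathscr L^1$ (these give mutually singular measures) and running a Fubini-type argument on the absolutely continuous parts.

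Second, even granting that claim, $|\sigma_j|(\Omega)<\infty$ is only the \emph{weighted} bound $\int_{I_j}w_j\,d|\mu_j|<\infty$, where $w_j(t)=\mathscr H^{n-1}(\Omega\cap\{x\cdot\zeta_j=t\})$. This weight vanishes at the ends of $I_j$ unless $\Omega$ has a flat face there, so the bound gives neither $g_j\in BV(I_j)$ nor bounded profiles. Concretely, take $\A=D$ on scalars and $\Omega=B_1\subset\R^2$. Then $u(x)=(1-x_1)^{-1/4}$ satisfies $Du=e_1\sigma$ with $\sigma$ finite, yet its profile $(1-t)^{-1/4}$ is unbounded and not in $BV(-1,1)$; here $u\in BV(B_1)$ nonetheless.

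The correct bookkeeping is weighted. For a pure wave, $|Du_j|(\Omega)\le c_\A^{-1}|P|\,|\sigma_j|(\Omega)$. Showing $u_j\in L^1(\Omega)$ needs a separate Fubini estimate near each endpoint $T$ of $I_j$: bound $\int w_j|g_j|$, up to a constant, by $\int(\int_\tau^T w_j)\,d|\mu_j|(\tau)$, using that $w_j$ is monotone near $T$ (Brunn--Minkowski).

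With transversal polynomial factors this must be redone degree by degree, for two reasons. The $y^\alpha$-coefficients of $\sigma_j$ are controlled only with weights that degenerate further on shrinking slices. Moreover, in the downward induction of Lemma~\ref{lem:rigidity}, the lower-degree profiles are driven through $\A_{\perp j}(\nabla_y)$ by the higher-degree profiles themselves, not by their derivatives. So ``modulo lower-order polynomial terms'' hides exactly the estimates that are needed.

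As it stands, your proposal, like the paper, proves $u\in BV_{loc}(\Omega,E)$.
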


We write $B_\rho(x)$ to denote the open ball with radius $\rho$ centered at $x \in \R^n$. For simplicity, we write $B_\rho = B_\rho(0)$. Let $u \in BV^\A(\Omega, E)$ and let $x \in \Omega$. For a given $\rho > 0$, we define the $\rho$-blow up of $u$ at $x$ at scale $r>0$ as
\[
	v_{x,\rho}(y) \defeq    u(x + \rho y) - \fint_{B_\rho(x)} u, \qquad y \in B_1.
\]
A map $v : B_1(0) \to E$ is called a $BV^\A$-tangent of $u$ at $x$ provided that it is an accumulation point of the renormalized blow-up family $\{c_{x,\rho} \cdot v_{x,\rho}\}$ with respect to the $L^1$-topology, i.e., there exists an infinitesimal sequence $\rho_j \to 0^+$  such that
\begin{equation}\label{eq:tan}
	c_{x,\rho_j} \cdot v_{x,\rho_j} \longrightarrow v \; \text{in } L^1(B_1,E), \qquad c_{x,\rho_j} \coloneqq \frac{1}{|\A u|(B_{\rho_j}(x))}.
\end{equation}

Following standard notation for $BV$ maps, we write $\Tan_{BV^\A}(u,x)$ to denote the set of all $BV^\A$-tangents of $u$ at $x$. The theory developed in~\cite{AS2025} and the theory of tangent measures (e.g,~\cite{AFP2000}) guarantees that the tangent cone is non-zero, i.e., $\Tan_{BV^\A}(u,x) \neq \{0\}$ for $|\A u|$-almost every $x \in \Omega$. Similarly, Lebesgue's theorem yields that
\[
	\A v(dy) = \frac{\A u}{|\A u|}(x) \, |\A v|(dy)  \qquad \text{for all $v \in \Tan_{BV^\A}(u,x)$}
\]
for every $x$ in a full $|\A u|$-measure subset of $\Omega$. 

As a direct consequence of the plane-wave decomposition (Lemma~\ref{lem:rigidity}) and the structure of $BV^\A$-tangents, we obtain the following description, which is often useful in the calculus of variations:

\begin{corollary}\label{cor:poly_tangents}
	Let $\A(D)$ be a complex elliptic operator and let $u$ be a map in $BV^\A(\Omega,E)$. There exists a full $|\A u|$-measure set $\Omega' \subset \Omega$ such that for every $x \in \Omega'$, any tangent $v \in \mathrm{Tan}_{BV^\A}(u,x)$ is a finite sum of transversally polynomial plane waves taking the form:
	\[
		v(y) = Q_{m-1}(y) + \sum_{j=1}^r \sum_{|\beta| \le m} q_{j,\beta}(y_{\zeta_j}) \, g_{j,\beta}(y \cdot \zeta_j),
	\]
    where $y_{\zeta_j} \in \R^{n-1}$ are the respective transverse coordinates and each 1D profile $g_{j,\beta} \in BV_{loc}(\R,E)$.
\end{corollary}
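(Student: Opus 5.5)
The plan is to reduce Corollary~\ref{cor:poly_tangents} directly to Lemma~\ref{lem:rigidity} applied on the convex domain $B_1$. The only real work is to check that every $BV^\A$-tangent solves an equation of the form \eqref{eq:rig} with a fixed direction $P_0 \in F$.

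\textbf{Step 1: choice of $\Omega'$.} By the Besicovitch differentiation theorem, $|\A u|$-almost every $x\in\Omega$ is a Lebesgue point of the polar $P(y) \defeq \frac{\A u}{|\A u|}(y)$ with respect to $|\A u|$, i.e.
\[
\lim_{\rho\to 0}\frac{1}{|\A u|(B_\rho(x))}\int_{B_\rho(x)}|P(y)-P(x)|\,d|\A u|(y)=0 .
\]
We let $\Omega'$ be the set of such points.

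\textbf{Step 2: every tangent solves \eqref{eq:rig}.} Fix $x\in\Omega'$ and $v\in\Tan_{BV^\A}(u,x)$ realized along a sequence $\rho_j\to 0^+$. Set $w_j \defeq c_{x,\rho_j}v_{x,\rho_j}$. For first-order $\A$, a change of variables gives
\[
\A w_j = \frac{1}{|\A u|(B_{\rho_j}(x))}\,(T_{x,\rho_j})_\#(\A u)\lfloor B_1, \qquad T_{x,\rho}(y) \defeq \frac{y-x}{\rho},
\]
after adjusting by the factor $\rho_j^{k}$ coming from the order $k$, which the normalization absorbs. Hence $|\A w_j|(B_1)=1$. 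We then split
\[
\A w_j = P(x)\,\lambda_j + \eta_j ,
\]
where $\lambda_j$ is the rescaled push-forward of $|\A u|$, a positive measure of mass $1$, and $|\eta_j|(B_1)\to 0$ by the Lebesgue point property at $x$. Up to a subsequence, $\lambda_j \overset{*}{\rightharpoonup}\sigma$ locally in $B_1$. Since $w_j\to v$ in $L^1$, we have $\A w_j\to\A v$ in $\mathscr D'(B_1,F)$. Therefore $\A v = P(x)\,\sigma$ with $\sigma\in\mathcal M(B_1)$ positive and $\sigma(B_1)\le 1$. This recovers the identity $\A v = \frac{\A u}{|\A u|}(x)\,|\A v|$ already recalled before the statement.

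\textbf{Step 3: apply the rigidity lemma.} Since $B_1$ is open and convex, Lemma~\ref{lem:rigidity} applies with $P=P(x)$. It yields
\[
v = Q_{m-1}+\sum_{j=1}^r u_j ,
\]
where the $u_j$ are transversally polynomial plane waves along distinct directions $\zeta_j$ with $BV_{loc}$ profiles, and $m,r$ depend only on the symbol. By Definition~\ref{def:plane_waves}, each $u_j$ can be expanded as $\sum_\beta q_{j,\beta}(y_{\zeta_j})\,g_{j,\beta}(y\cdot\zeta_j)$. The transverse polynomials have degree at most $m$, so the index set can be taken to be $|\beta|\le m$, padding with zero profiles if necessary. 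This is exactly the claimed form. Note that $m,r$ are uniform in $x$ because they depend only on $\A(\zeta)$.

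\textbf{Main obstacle: higher order.} Lemma~\ref{lem:rigidity} is stated for first-order operators, while the corollary allows any $\C$-elliptic $\A(D)$. The first thing I would try is the order-reduction of Section~\ref{sec:higher}. Consider $W \defeq (v,\nabla v,\dots,\nabla^{k-1}v)$, which satisfies $\mathbb L_\A(D) W = \tilde P\,\sigma$ for a fixed $\tilde P$: the compatibility components of $\mathbb L_\A W$ vanish identically, and the remaining component is $\A v = P(x)\sigma$. The operator $\mathbb L_\A$ is first-order and $\C$-elliptic by the reduction principle, so the lemma decomposes $W$. Reading off the first component of $W$ gives the plane-wave decomposition of $v$, with the polynomial degrees enlarged by at most a constant depending on $k$ and $\A$. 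Checking that this component inherits $BV_{loc}$ profiles is the delicate point. I expect it to follow because the profiles of the higher components are derivatives of those of $v$ along $\zeta_j$.
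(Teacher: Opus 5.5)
Your Steps 1--3 are the paper's argument: the paper obtains the corollary directly from the polar identity $\A v=\frac{\A u}{|\A u|}(x)\,|\A v|$ for tangents at $|\A u|$-a.e.\ $x$, combined with Lemma~\ref{lem:rigidity} on the convex set $B_1$. Your Step~2 proves that identity, which the paper only attributes to Lebesgue's theorem. Note that this proof needs the normalization $c_{x,\rho}=\rho^{n-k}/|\A u|(B_\rho(x))$: the rescaling produces a factor $\rho^{k-n}$, not $\rho^{k}$, and the paper's $1/|\A u|(B_\rho(x))$ does not absorb it.

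The paper does not treat higher order at all. In your sketch, $\mathbb L_\A$ acts only on $\nabla^{k-1}v$, giving $\mathbb L_\A(\nabla^{k-1}v)=(P(x)\sigma,0)$; it does not act on the full jet $(v,\nabla v,\dots,\nabla^{k-1}v)$. So $v$ cannot simply be read off as a component. You must instead integrate back from the plane-wave decomposition of $\nabla^{k-1}v$. That is a step of the same type as the one asserted without proof in Step~3 of the proof of Lemma~\ref{lem:rigidity}.
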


Since tangents of tangent maps are themselves tangent maps, an iterated blow-up argument allows us to eliminate the transversal polynomials entirely, geometrically isolating a pure plane wave:

\begin{corollary}[Pure Plane Wave Tangents]\label{cor:pure_tangents}
    Let $\A(D)$ be a complex elliptic operator and let $u \in BV^\A(\Omega,E)$. There exists a full $|\A u|$-measure set $\Omega' \subset \Omega$ such that for every $x \in \Omega'$, the set of tangents $\mathrm{Tan}_{BV^\A}(u,x)$ contains a pure plane wave map:
    \[
        v(y) = B \, g(y \cdot \zeta_0)
    \]
    for some direction $\zeta_0 \in S^{n-1}$, some vector $B \in E$ and some non-constant 1D profile $g \in BV_{loc}(\R)$.
\end{corollary}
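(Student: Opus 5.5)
We prove Corollary~\ref{cor:pure_tangents} by an iterated blow-up. We start from the description of tangents in Corollary~\ref{cor:poly_tangents} and rely on the stability of tangent cones under further blow-ups ("tangents of tangents are tangents").

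\emph{Step 1: a good set of points.} Let $\Omega'$ be the intersection of three full $|\A u|$-measure sets:
\begin{itemize}
\item the set given by Corollary~\ref{cor:poly_tangents};
\item the set where $\Tan_{BV^\A}(u,x) \neq \{0\}$ and $\A v = P_0 |\A v|$ for every tangent $v$, with $P_0 = \frac{\A u}{|\A u|}(x)$;
\item the set where the tangent cone is closed under blow-ups.
\end{itemize}
Closure under blow-ups means: if $v \in \Tan_{BV^\A}(u,x)$ and $y_0 \in B_1$, then every (renormalized, mean-subtracted) $BV^\A$-tangent of $v$ at $y_0$ lies again in $\Tan_{BV^\A}(u,x)$, up to a translation or rescaling compatible with~\eqref{eq:tan}. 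I expect to obtain this last property by adapting Preiss' principle for tangent measures (see~\cite{AFP2000}) to the $L^1$-blow-ups of $u$. The adaptation uses the compactness of normalized blow-ups provided by~\cite{AS2025}, together with a diagonal argument along the sequences $\rho_j$. This is where I expect the main technical obstacle: the normalization $c_{x,\rho}$ is tied to $|\A u|$, so mean subtraction and renormalization must be checked to commute with the diagonal extraction.

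\emph{Step 2: choose a nonzero tangent and a good point for it.} Fix $x \in \Omega'$ and a nonzero $v \in \Tan_{BV^\A}(u,x)$. By Corollary~\ref{cor:poly_tangents},
\[
\A v = \sum_{j=1}^r \A v_j \quad\text{with}\quad v_j(y) = \sum_{|\beta|\le m} q_{j,\beta}(y_{\zeta_j})\, g_{j,\beta}(y\cdot\zeta_j).
\]
Also $\A v \neq 0$, since $|\A v|(B_1)=1$ after normalization. Hence some component $\sigma_{j}$ with $\A v_j = P_0\sigma_j$ is nonzero. Recall that $\sigma_j = \sum_\beta q_{j,\beta}(y_{\zeta_j})\, \mu_{j,\beta}(y\cdot\zeta_j)$ for some 1D measures $\mu_{j,\beta}$. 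The directions $\zeta_j$ are distinct, so the supports of the different $\sigma_j$'s are transversal families of hyperplanes. Choose a point $y_0$ with the following properties:
\begin{itemize}
\item $y_0\cdot\zeta_{j}=t_0$, where $t_0$ is a point of density of some nonzero $\mu_{j,\beta}$;
\item the transverse coordinate $y_{0,\zeta_j}$ avoids the zero sets of the polynomials $q_{j,\beta}$ that do not vanish identically;
\item $y_0$ is generic with respect to the other directions $\zeta_i$, $i\neq j$.
\end{itemize}
Such a point can be chosen with positive $|\A v|$-mass nearby.

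\emph{Step 3: blow up $v$ at $y_0$.} Zoom in on $v$ at $y_0$ at scales $s\to0$. The polynomials $q_{j,\beta}$ are smooth and nonvanishing at $y_{0,\zeta_j}$, so under the blow-up they freeze to the constants $q_{j,\beta}(y_{0,\zeta_j})$. Consequently, after renormalizing by $|\A v|(B_s(y_0))$, the $j$-th term converges to $\sum_\beta q_{j,\beta}(y_{0,\zeta_j})\,\tilde g_{\beta}(y\cdot\zeta_j)$, where $\tilde g_\beta$ are blow-ups of the BV profiles $g_{j,\beta}$ at $t_0$. The contributions of the other directions are negligible, because near a generic point their measures carry lower mass at the scale at which $\sigma_j$ concentrates. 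I would justify this by choosing $t_0$ such that the density ratio between the $j$-th piece and the remaining pieces diverges or stays comparable, passing to a further subsequence where necessary. The polynomial $Q_{m-1}$ and the smooth parts of the profiles disappear after the mean is subtracted and the normalization is applied.

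\emph{Step 4: read off the limit.} The limit is a one-directional plane wave $w(y)=G(y\cdot\zeta_0)$ with $\zeta_0=\zeta_j$ and $G\in BV_{loc}(\R,E)$ nonconstant, since $|\A w|(B_1)=1$. It also satisfies $\A w = \A(\zeta_0)G'(y\cdot\zeta_0) = P_0 |\A w|$. The measure $G'$ is therefore such that $\A(\zeta_0)G' \in \mathrm{span}\{P_0\}$. By $\R$-ellipticity, $\A(\zeta_0)$ is injective, so $G' = B\,\lambda$ for the single vector $B=\A(\zeta_0)^{-1}P_0$ (defined on the image) and some scalar measure $\lambda$. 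Integrating, $G = B\,g + \text{const}$ with $g\in BV_{loc}(\R)$ nonconstant, and the constant is removed by the mean normalization. Finally, Step 1 places $w$ in $\Tan_{BV^\A}(u,x)$.
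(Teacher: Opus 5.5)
Your Steps 2--4 follow the same route as the paper's Case~2, and your Step~1 matches its appeal to Preiss' principle. There is, however, a genuine gap.

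\textbf{The gap: the absolutely continuous regime.} Step~3 tacitly assumes that the secondary blow-up point $y_0$ satisfies $|\A v|(B_s(y_0))/s^n \to \infty$. Only then do $Q_{m-1}$, the absolutely continuous parts of the profiles, and the other directions disappear. After mean subtraction, a smooth piece contributes $s\,\nabla Q(y_0)y + O(s^2)$, and this is negligible against the factor $s^{n-1}/|\A v|(B_s(y_0))$ only if that factor is $o(s^{-1})$.

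At a Lebesgue point $x$ of the absolutely continuous part of $\A u$, every tangent satisfies $\A v = c\,P_0\,\mathscr L^n$ on $B_1$. Such points can carry positive $|\A u|$-measure. No such $y_0$ exists, and every further blow-up of $v$ returns the nonzero linear map $y\mapsto \nabla v(y_0)y$.

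The same problem is hidden in Step~2. Corollary~\ref{cor:poly_tangents} only gives $\A v = \A Q_{m-1} + \sum_j \A v_j$, so ``some $\sigma_j \neq 0$'' does not follow.

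\textbf{This cannot be patched.} The statement itself fails there. Take $\A = D$ and $u(x)=x$ on a bounded $\Omega\subset\R^2$. Every blow-up of $u$ at every point is a multiple of $y\mapsto y$. The rank-two polar has no characteristic direction, so $v = Q_{m-1}$. A nonzero multiple of the identity takes values spanning $\R^2$, so it is not of the form $B\,g(y\cdot\zeta_0)$.

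The paper tries to cover this regime with its Case~1, but that does not work either. A constant limit of mean-zero blow-ups is identically $0$, and constant profiles are excluded by the statement anyway. What your argument, like the paper's, can actually prove is the claim at points admitting a tangent $v$ for which $\A v$ has a nontrivial part singular with respect to $\mathscr L^n$. You would have to restrict to such points explicitly, within the support of $|\A^s u|$, and justify that such tangents exist there.

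\textbf{Fixes in the singular regime.} Here your route is the paper's, with two points to correct.

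First, drop the alternative ``or stays comparable'' in Step~3. If another direction keeps comparable mass, the limit is not one-directional. What actually works is mutual singularity. For $i\neq j$, the parts of $\sigma_i$ and $\sigma_j$ singular with respect to $\mathscr L^n$ are mutually singular, and so are the singular parts of $Dv_i$ and $\sigma_j$. The reason is that a hyperplane $\{y\cdot\zeta_i=t\}$ meets $\{y\cdot\zeta_j\in N\}$ in an $\mathscr H^{n-1}$-null set whenever $\mathscr L^1(N)=0$. By the Besicovitch differentiation theorem, the relevant mass ratios then tend to $0$ at $|\sigma_j^s|$-a.e.\ $y_0$.

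Second, you need smallness of $|Dv_i|(B_s(y_0))$, not just of $|\A v_i|(B_s(y_0))$. The Poincar\'e inequality then converts this into $L^1$-smallness of the blow-ups of $v_i$.

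On the positive side, your way of disposing of $Q_{m-1}$ is the correct one: it is killed by the normalization. The paper's closing argument, that $Q_{m-1}$ must be constant because $\A w$ is singular, does not work. It fails for non-constant $\A$-free polynomials, such as the infinitesimal rigid motions $y\mapsto Wy$ with $W$ skew for the symmetric gradient.
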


\begin{remark}[Preiss' tangents] Similar results hold if one considers globally defined tangents by requiring 
	\[
		c_j \cdot v_{x,\rho_j} \longrightarrow v \; \text{in } L^1_\mathrm{loc}(\R^n,E), 
	\]
for some nonzero map $v$ and some sequence $(c_j)$ of positive constants.  
\end{remark}

\section{Auxiliary lemmas}\label{pre}
Before proceeding to the proofs of our main results, we establish the core geometric mechanism that translates the algebraic condition of hyperplane vanishing into the existence of a continuous manifold of frequencies. The central idea is that if the image bundle of an operator twists severely enough to vanish on all hyperplanes, the set of directions mapping to a common target vector must be sufficiently ``large'' to contain a smooth curve. To rigorously quantify this size, we first prove a spherical co-area inequality (Lemma~\ref{lem:co}) that bounds the one-dimensional Hausdorff measure of spherical sets by their average number of intersections with great equators. We then apply this integral-geometric tool to the real algebraic variety generated by the operator's symbol (Lemma~\ref{lem:crofton}). This ensures the existence of a non-trivial smooth curve, which will serve as the topological seed for the complex-analytic rigidity argument in the subsequent section.

\begin{lemma}[Crofton-type formula]\label{lem:co}
Let $A \subset S^{n-1}$ be a Borel set. Defining 
\[
    \mathscr{I}_{\mathrm{sph}}^1(A) \defeq \int_{\mathrm{Gr}(n, n-1)} \mathscr{H}^0(A \cap H) \, d\gamma_{n,n-1}(H),
\]
where $\gamma_{n,n-1}$ is the unique Haar measure on the Grassmannian, we have 
\[
    \mathscr{I}_{\mathrm{sph}}^1(A) \le c_n \mathscr{H}^1(A),
\]
for some dimensional constant $c_n > 0$.
\end{lemma}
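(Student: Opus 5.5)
We reduce the statement to the Euclidean Crofton inequality $\mathscr I^1 \le \mathscr H^1$ recalled in the Remark above, applied after radial extension (coning) of $A$. In that Remark, $\mathscr I^1$ is the integral geometric measure in $\R^n$ built from projections onto lines. The key observation is that a hyperplane $H \in \mathrm{Gr}(n,n-1)$ is $\nu^\perp$ for a unit vector $\nu$, and $\mathscr H^0(A\cap H)$ counts the points $a\in A$ with $a\cdot\nu = 0$.

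Set
\[
C = \{ t a : a \in A,\ 1 < t < 2\} \subset \R^n .
\]
For each line direction $\nu \in S^{n-1}$, the fibres of the projection $P_\nu$ onto $\R\nu$ are the affine hyperplanes $\{x\cdot\nu = s\}$. It is more convenient to work with the dual formulation and use the fibres $P_\nu^{-1}\{0\} = \nu^\perp$ directly. The plan is as follows.

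\emph{Step 1.} Replace the Haar measure on $\mathrm{Gr}(n,n-1)$ by the normalized surface measure on $S^{n-1}$ via $\nu\mapsto\nu^\perp$; this is a $2$-to-$1$ pushforward, so it costs only a constant. Then
\[
\mathscr I^1_{\mathrm{sph}}(A) = c\int_{S^{n-1}} \#\{a\in A : a\cdot \nu =0\}\, d\sigma(\nu).
\]

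\emph{Step 2 (the main step).} Prove the bound by covering. Assume $\mathscr H^1(A)<\infty$, since otherwise there is nothing to prove. Fix $\delta>0$ and cover $A$ by sets $E_i$ with $\operatorname{diam} E_i = d_i<\delta$ and $\sum d_i \le \mathscr H^1(A)+\delta$. For each $i$ pick $a_i\in E_i$. If $\nu^\perp$ meets $E_i$, then $|a_i\cdot\nu|\le d_i$, so $\nu$ lies in the band $\{\nu : |a_i\cdot \nu| \le d_i\}$, whose $\sigma$-measure is at most $c_n d_i$. Hence
\[
\int \#\{i : \nu^\perp\cap E_i\neq\emptyset\}\,d\sigma(\nu)\le c_n\sum_i d_i .
\]

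\emph{Step 3.} Pass to the limit. For $\sigma$-a.e.\ $\nu$, as $\delta\to 0$ along a sequence of covers, the count $N_\delta(\nu)=\#\{i:\nu^\perp\cap E_i\ne\emptyset\}$ eventually dominates $\min(k,\#(A\cap\nu^\perp))$ for each fixed $k$. Indeed, if $k$ distinct points of $A\cap\nu^\perp$ are given, they lie at mutual distance greater than $\delta$ once $\delta$ is small, so no single $E_i$ contains two of them. Thus $\#(A\cap \nu^\perp)\le\liminf_{\delta\to 0} N_\delta(\nu)$, and Fatou's lemma gives $\mathscr I^1_{\mathrm{sph}}(A)\le c_n \mathscr H^1(A)$.

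\emph{Measurability.} The integrand must be measurable in $\nu$. This is the main technical nuisance; I expect it to be the main obstacle, since the argument above is otherwise elementary. To avoid it, I would interpret the integral as an upper integral: the inequality then holds without any measurability claim, and for Borel $A$ measurability follows from the standard Federer theory (2.10.5–2.10.6). The same argument shows more generally that the covering counts $N_\delta$ are measurable if the $E_i$ are taken open, which makes Fatou's lemma applicable directly.
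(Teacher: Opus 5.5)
Your Steps 1--3 are the paper's argument. You cover $A$ by sets of diameter at most $\delta$, bound the measure of the hyperplanes meeting each set by $c_n$ times its diameter (your band estimate), note that for a fixed $H$ the number of covering sets met by $H$ eventually dominates $\mathscr H^0(A\cap H)$, and conclude with Fatou's lemma. The cone $C$ and the announced reduction to the Euclidean inequality $\mathscr I^1\le\mathscr H^1$ are never used and can be deleted; your remark on measurability (taking the covering sets open or closed) spells out a point the paper passes over.
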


\begin{proof} For simplicity, we write $G = \mathrm{Gr}(n,n-1)$ and $\gamma = \gamma_{n,n-1}$.
Let $\delta > 0$. By the definition of the $1$-dimensional Hausdorff measure, there exists a countable covering $\{C_i\}_{i=1}^{\infty}$ of $A$ by spherical sets such that $\mathrm{diam}(C_i) \le \delta$ for all $i$, and
\[
    \sum_{i=1}^{\infty} \mathrm{diam}(C_i) \le \mathscr{H}_\delta^1(A) + \delta.
\]
For any fixed set $C_i$ on the sphere, the $\gamma$-measure of the set of great equators intersecting $C_i$ is bounded proportionally by its diameter. Thus, with the appropriate constant $c_n$, we have
\begin{equation}\label{eq:diam}
 \int_{G} \mathbf{1}_{\{C_i \cap H \neq \emptyset\}} \, d\gamma(H) \le c_n \mathrm{diam}(C_i).
\end{equation}
Now, consider a fixed equator $H \in G$. If $H$ intersects $A$ on at least $k$ distinct points, there exists a strictly positive minimum pairwise distance between these intersection points. Consequently, for sufficiently small $\delta > 0$, any covering set $C_i$ with $\mathrm{diam}(C_i) \le \delta$ can contain at most one of these intersection points. Therefore, the equator $H$ must intersect at least $k$ distinct covering sets:
\[
    k \le \liminf_{\delta \to 0} \sum_{i=1}^{\infty} \mathbf{1}_{\{C_i \cap H \neq \emptyset\}}.
\]
Taking the supremum over the number of intersection points, we obtain $\mathscr{H}^0(A \cap H) \le \liminf_{\delta \to 0} \sum_{i=1}^{\infty} \mathbf{1}_{\{C_i \cap H \neq \emptyset\}}$. Integrating over $G$ and applying Fatou's Lemma, we find:
\begin{align*}
\int_{G} \mathscr{H}^0(A \cap H) \, d\gamma(H) &\le \liminf_{\delta \to 0} \sum_{i=1}^{\infty} \int_{G} \mathbf{1}_{\{C_i \cap H \neq \emptyset\}} \, d\gamma(H) \\
&\le c_n \liminf_{\delta \to 0} \sum_{i=1}^{\infty} \mathrm{diam}(C_i) \le c_n (\mathscr{H}^1(A) + \delta).
\end{align*}
Taking $\delta \to 0$ yields the result.
\end{proof}

Equipped with this spherical co-area inequality, we can now prove that hyperplane vanishing rigorously enforces the existence of non-trivial curves within the operator's image variety.

\begin{lemma}\label{lem:crofton} 
Let $n \ge 2$ and let $\A(D)$ be an operator as in~\eqref{eq:A}. Suppose that $w \in \mathcal I^{n-1}_\A$. Then, the punctured space
\[
    V_w = \{ \zeta \in \R^n \smallsetminus \{0\} : w \in \im \A(\zeta) \}
\]
contains a non-trivial smooth projective curve.
\end{lemma}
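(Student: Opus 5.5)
We will show that $V_w$ has positive $\mathscr H^1$-measure after projecting to the sphere. Being a real algebraic cone, it must then contain a one-dimensional smooth stratum, which gives the curve.

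We may assume $w \neq 0$; if $w = 0$ then $V_w = \R^n \smallsetminus \{0\}$ and the statement is trivial. Set $A_w \defeq V_w \cap S^{n-1}$. Since $V_w$ is invariant under nonzero dilations, it suffices to find a smooth curve in $A_w$, or in its image in $\mathbb{RP}^{n-1}$.

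\emph{Step 1: algebraic structure.} We observe that $V_w \cup \{0\}$ is a real semi-algebraic cone. Indeed, $w \in \im \A(\zeta)$ if and only if $\operatorname{rank}[\A(\zeta)\,|\,w] = \operatorname{rank}\A(\zeta)$. Stratifying by the rank $s$ of $\A(\zeta)$, each stratum is cut out by polynomial equalities (all $(s+1)$-minors of $[\A(\zeta)\,|\,w]$ vanish) together with inequalities (some $s$-minor of $\A(\zeta)$ is nonzero). Consequently $A_w$ is a compact semi-algebraic subset of $S^{n-1}$. It admits a finite Whitney/Nash stratification into smooth connected semi-algebraic manifolds, and it has a well-defined dimension.

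\emph{Step 2: the hyperplane condition forces dimension $\ge 1$.} For $w \in \mathcal I^{n-1}_\A$ we have, by definition, that every hyperplane $H \in \mathrm{Gr}(n,n-1)$ contains some $\zeta \neq 0$ with $w \in \im \A(\zeta)$. Since $w \ne 0$ forces $\zeta \ne 0$, we get $\mathscr H^0(A_w \cap H) \ge 1$ for every $H$. Integrating and applying Lemma~\ref{lem:co},
\[
 0 < \gamma_{n,n-1}(\mathrm{Gr}(n,n-1)) \le \mathscr I^1_{\mathrm{sph}}(A_w) \le c_n \mathscr H^1(A_w).
\]
Hence $\mathscr H^1(A_w) > 0$. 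A semi-algebraic set of dimension $0$ is finite and thus $\mathscr H^1$-null, so $\dim A_w \ge 1$. (Borel measurability of $A_w$ is automatic from semi-algebraicity.)

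\emph{Step 3: extracting the curve.} Some stratum $S$ of the stratification of $A_w$ has dimension $d \ge 1$, and it is a smooth (indeed Nash) submanifold of $S^{n-1}$. Inside $S$ we take any nonconstant smooth embedded arc. For definiteness we take a coordinate line in a Nash chart, or, if a canonical choice is preferred, the intersection of $S$ with a generic linear subspace of codimension $d-1$. By transversality (Bertini/Sard for semi-algebraic maps) this intersection is a smooth one-dimensional semi-algebraic manifold. Its cone in $V_w$ is a non-trivial smooth projective curve.

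The main obstacle is the passage from the measure bound to genuine smoothness and regularity of a curve. Lemma~\ref{lem:co} only yields $\mathscr H^1(A_w) > 0$, and converting that into a smooth curve relies on the semi-algebraic stratification. The key point is ensuring that the rank condition defining $V_w$ really is semi-algebraic, since "$w \in \im$" is not closed where the rank drops; handling it via the finite rank stratification of Step 1 is what I would check most carefully. If "projective curve" is meant algebraically, one would additionally take the Zariski closure of the arc and argue that its real smooth locus is still contained in $\overline{V_w}$ on an open piece.
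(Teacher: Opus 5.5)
Your argument is correct and follows the paper's route: you show $A_w$ is semi-algebraic, use Lemma~\ref{lem:co} to get $\mathscr H^1(A_w)>0$, and take a smooth arc inside a stratum of dimension at least $1$; your rank stratification in Step~1 is more careful than the paper's claim that $V_w\cup\{0\}$ is cut out by the $(M+1)$-minors of $[\A(\zeta)\mid w]$, which holds only where $\A(\zeta)$ has full rank, and extracting the arc from a stratum of dimension $\ge 1$ is more accurate than asserting a $1$-dimensional stratum exists. The one slip is calling $A_w$ compact, which fails exactly when the rank of $\A(\zeta)$ drops, as you yourself observe, but compactness is never used.
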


\begin{proof} 
Let $\overline{V_w} = V_w \cup \{0\}$. Because a vector $w$ lies in the column space of $\A(\zeta)$ if and only if the $(M+1)$-vector wedge product of the columns of $\A(\zeta)$ with $w$ vanishes, $\overline{V_w}$ is exactly the real algebraic variety defined by the simultaneous vanishing of all $(M+1) \times (M+1)$ minors of the augmented matrix $[\A(\zeta)\mid w]$. Since the symbol $\A(\zeta)$ is homogeneous, $\overline{V_w}$ is a conical variety.

Consider the spherical slice $A \defeq V_w \cap S^{n-1}$, which is a real algebraic variety on the sphere. By hypothesis, $A$ intersects every equator $H = V \cap S^{n-1}$. Thus, $\mathscr{H}^0(A \cap H) \ge 1$ for all $H \in G$. Integrating this inequality, the spherical co-area inequality (Lemma~\ref{lem:co}) implies:
\[
    0 < \gamma(G) \le \int_G \mathscr{H}^0(A \cap H) d\gamma \le c_n \mathscr{H}^1(A),
\]
hence $\mathscr{H}^1(A) > 0$. As $A$ is a real algebraic variety with strictly positive $1$-dimensional Hausdorff measure, its topological dimension must be at least $1$. By the Whitney stratification theorem (see, e.g., Bochnak, Coste, and Roy~\cite{BCR98}), $A$ admits a stratification into a finite disjoint union of smooth manifolds $S_0 \cup S_1 \cup \cdots \cup S_d$. Since $\mathscr{H}^1(A) > 0$, $A$ must contain a $1$-dimensional stratum. Thus, $A$ contains a non-trivial smooth regular curve $\gamma \subset S^{n-1} \cap V_w$. This completes the proof.
\end{proof}

\section{Proofs of the main results}

In all that follows we may assume that $n\ge 2$, for otherwise all the results presented here follow directly from the triviality of one-dimensional operators.

Before proceeding to the proof of the main theorem, it is worth spending a few words on the ideas driving the equivalence in Theorem~\ref{thm:main}. The true novelty here is the fact that complex-ellipticity forces vanishing on hyperplanes. This is deduced by combining the geometric intersections of Lemma~\ref{lem:crofton} with the rigidity of holomorphic maps of several variables (Hartogs' theorem). Conversely, the reverse implication rests on an underlying algebraic rigidity: any first-order elliptic operator failing to be complex-elliptic must inherently contain a $2$-dimensional real plane where it behaves exactly like the Cauchy-Riemann equations. Since every $(n-1)$-dimensional hyperplane in $\R^n$ necessarily intersects this $2$-plane, a common non-zero vector must exist in the intersection, forcing hyperplane vanishing.

\begin{proof}[Proof of Theorem \ref{thm:main}]
We organize the proof into three steps establishing the logical cycle (i) $\implies$ (iii) $\implies$ (ii) $\implies$ (i).

\medskip

\emph{Step 1: (i) $\implies$ (iii)} \\
Assume $\A(D)$ is first-order and complex-elliptic. We first establish that for any non-zero $f \in F$, the complex projective variety associated with the punctured cone
\[
    V_f = \{ \xi \in \C^n \smallsetminus \{0\} : f \in \im \A(\xi) \}
\]
must have dimension zero. Suppose for contradiction that its projective dimension is at least $1$. Then $V_f$ contains a $1$-dimensional complex projective curve $\Gamma \subset \mathbb{CP}^{n-1}$, which generates a complex cone $\mathscr C \subset \C^n$ of complex dimension $2$. 

We claim that the punctured cone $\mathscr C^* \defeq \mathscr C \smallsetminus \{0\}$ is connected. 
By construction, $\mathscr C^*$ is given by the union  $\bigcup_{\zeta \in \gamma} L_\zeta$ where $L_\zeta = \{\lambda \zeta : \lambda \in \C \smallsetminus \{0\}\}$ is the punctured complex line generated by a non-zero real vector $\zeta \in \R^n$. Because $L_\zeta$ is topologically isomorphic to $\C$, it is connected. Now, let $x = \lambda \zeta_1, y=\eta \zeta_2 \in \mathscr C^*$. By the observation above, there is a continuous path from $x$ to $\zeta_1$ and from $y$ to $\zeta_2$. Since $\gamma$ is also connected, then there is also a continuous path from $\zeta_1$ to $\zeta_2$. This proves that $\mathscr C^*$ is connected.

Because $\A(D)$ is complex-elliptic, $\ker_\C \A(\xi) = \{0\}$ for all $\xi \ne 0$. Therefore, for each $\xi \in \mathscr C^*$, the linear system $\A(\xi) v = f$ admits a unique solution $v(\xi) \in \C \otimes E$. By Cramer's rule, the map $\xi \mapsto v(\xi)$ is locally rational, and given the connectedness of $\mathscr C^*$, it defines a vectorial holomorphic function on $\mathscr C^*$. The $1$-homogeneity of the symbol dictates $v(\lambda \xi) = \lambda^{-1} v(\xi)$ for all $\lambda \in \C \smallsetminus \{0\}$. By Hartogs' extension theorem (see, e.g.,~\cite[Thm.~2.6.7]{JP2000}), since $\mathscr C^*$ has complex dimension $2$, $v$ must extend analytically to the origin, implying it is bounded near $0$. This contradicts the blow-up singularity $|v(\lambda \xi)| \to \infty$ as $\lambda \to 0$.

Consequently, the projective variety associated with $V_f$ must be zero-dimensional. By B\'ezout's theorem (see, e.g., \cite[Thm.~18.3]{Harris1992}), the number of complex lines in $V_f$ is finite and bounded by an integer $r \ge 1$ depending only on the degrees of the polynomials defining the image spaces of $\A$. Restricting to real frequencies, this strictly implies that the number of distinct real directions $\zeta \in S^{n-1}$ mapping to $f$ satisfies:
\[
    \# \{\zeta \in S^{n-1} : f \in \im \A(\zeta) \} \le r.
\]
Since complex-ellipticity trivially implies $\R$-ellipticity, this proves (iii).

\medskip

\emph{Step 2: (iii) $\implies$ (ii)} \\
Assume $\A(D)$ is $\R$-elliptic and that there exists $r \ge 1$ bounding the number of directions as in (iii). We want to show that the $(n-1)$-dimensional wave cone is trivial, i.e., $\mathcal{I}^{n-1}_\A = \{0\}$. 

Suppose for contradiction there exists a non-zero vector $w \in \mathcal{I}^{n-1}_\A$. By Lemma~\ref{lem:crofton}, the spherical slice of real frequencies mapping to $w$, $A_w \defeq \{\zeta \in S^{n-1} : w \in \im \A(\zeta)\}$, must contain a non-trivial smooth regular curve. Since a continuous curve contains uncountably many points, this contradicts the finite bound $\# A_w \le r$ established in (iii). Therefore, no such $w$ can exist, yielding $\mathcal{I}^{n-1}_\A = \{0\}$. This proves (ii).

\medskip

\emph{Step 3: (ii) $\implies$ (i)} \\
Assume $\A(D)$ is $\R$-elliptic and vanishing on hyperplanes ($\mathcal{I}^{n-1}_\A = \{0\}$). We prove it must be $\C$-elliptic by contraposition. Assume that $\A(D)$ is real-elliptic but fails to be complex-elliptic. By definition, there exists a non-zero complex frequency $\xi = x + iy \in \C^n$ and a non-zero vector $a = u + iv \in \C \otimes E$ such that $\A(x+iy)(u+iv) = 0$. 

Because $\A(D)$ is real-elliptic and homogeneous of first-order, the vectors $x,y \in \R^n$ must be linearly independent; otherwise, $\xi$ would be a real scalar multiple, yielding $\A(\xi)u = \A(\xi)v = 0$ on $\R^n \smallsetminus \{0\}$, which violates $\R$-ellipticity. 
Expanding the complex system and separating the real and imaginary components yields:
\[
    \A(x)u - \A(y)v = 0 \quad \text{and} \quad \A(x)v + \A(y)u = 0.
\]
Define $w = \A(x)u = \A(y)v$. Since $x$ and $y$ are linearly independent and $a \neq 0$, the real-ellipticity of $\A(D)$ guarantees that $w \neq 0$. 

Let $\zeta = \alpha x + \beta y$ be an arbitrary non-zero frequency in $\mathrm{span}\{x,y\} \subset \R^n$. Using the identities derived above, we compute the image of the vector $\alpha u + \beta v$:
\begin{align*}
    \A(\alpha x + \beta y)(\alpha u + \beta v) &= \alpha^2 \A(x)u + \alpha\beta\A(x)v + \alpha\beta\A(y)u + \beta^2\A(y)v \\
    &= \alpha^2 w - \alpha\beta\A(y)u + \alpha\beta\A(y)u + \beta^2 w \\
    &= (\alpha^2 + \beta^2)w.
\end{align*}
Since $\zeta \ne 0$ in the real span of $x$ and $y$, we have $\alpha^2 + \beta^2 \neq 0$. Thus, $w \in \im \A(\zeta)$ for every non-zero frequency in the $2$-dimensional real plane spanned by $x$ and $y$. 

Now, let $V \in \mathrm{Gr}(n,n-1)$ be an arbitrary hyperplane. By dimensional counting, $\dim(V \cap \mathrm{span}\{x,y\}) \ge (n-1) + 2 - n = 1$. Consequently, $V$ must contain some non-zero frequency $\zeta_0 \in \mathrm{span}\{x,y\}$. Since $w \in \im \A(\zeta_0)$, it follows that $w \in \bigcup_{\zeta \in V} \im \A(\zeta)$.  Because this holds for every hyperplane $V$, the non-zero vector $w$ belongs to the intersection $\mathcal I^{n-1}_\A$. Therefore, $\mathcal I^{n-1}_\A \neq \{0\}$, which contradicts the assumption of hyperplane vanishing. Thus, $\A(D)$ must be $\C$-elliptic, completing the proof.
\end{proof}

\subsection{Generalization to Higher-Order Operators}\label{sec:higher}

It is a natural question to ask whether $\R$-ellipticity and hyperplane vanishing, as a characterization of $\C$-ellipticity, extends to general homogeneous operators of arbitrary order. By employing the exact order-reduction framework recently utilized in~\cite{AS2025}, we can show that it is a strictly necessary condition (but generally not a sufficient one).

Let $\A(D)$ be a $k$-th order homogeneous differential operator acting on $E$. Its first-order reduction is an operator $\mathbb{L}_\A(D) \defeq \mathbb{L}(D)$ acting on the space of $(k-1)$-symmetric tensors $V = \odot^{k-1}\R^n \otimes E$, mapping into $F \times Y$ (where $Y$ is the target space for the curl-free compatibility conditions). The linearized operator takes the form:
\[
    \mathbb{L}(D)v = (\tilde{\A}(D)v, \operatorname{curl} v),
\]
where $\tilde \A(D)$ is a first-order linear operator. 
In the frequency domain, the principal symbol of $\mathbb{L}$ evaluated at a frequency $\zeta \in \R^n$ acting on a tensor $\hat{v} \in V$ is given by
\[
    \mathbb{L}(\zeta)\hat{v} = (\tilde{\A}(\zeta)\hat{v}, \zeta \wedge \hat{v}).
\]
To understand the relationship between the original higher-order symbol $\A(\zeta)$ and the linearized symbol $\mathbb{L}(\zeta)$, we evaluate the latter on an \emph{exact} symmetric tensor. Choosing $\hat{v} = \zeta^{\otimes (k-1)} \otimes a$ for an arbitrary vector $a \in E$, the curl compatibility condition trivially vanishes ($\zeta \wedge \hat{v} = 0$), and the principal part collapses exactly into the original higher-order symbol:
\[
    \tilde{\A}(\zeta)(\zeta^{\otimes (k-1)} \otimes a) = \A(\zeta)a.
\]
This yields the fundamental algebraic mapping:
\begin{equation}\label{eq:red}
    \mathbb{L}(\zeta)(\zeta^{\otimes (k-1)} \otimes a) = (\A(\zeta)a, 0).
\end{equation}
In particular,
\begin{equation}\label{eq:eq}
\text{$\A(D)$ is $\C$-elliptic \quad $\Leftrightarrow$ \quad  $\mathbb{L}(D)$ is $\C$-elliptic.}
\end{equation}
Identity~\eqref{eq:red} demonstrates that for every frequency $\zeta \in \R^n$, the image of the original $k$-th order symbol is canonically embedded into the image of the first-order symbol:
\begin{equation}\label{eq:eq1}
    \im \A(\zeta) \times \{0\} \subset \im \mathbb{L}(\zeta).
\end{equation}

With this exact algebraic mapping established, the proofs of the higher-order corollaries follow directly by embedding the associated wave cones.

\begin{proof}[Proof of Corollary \ref{cor:high1}]Because~\eqref{eq:eq1} holds point-wise for every $\zeta$ and is preserved under intersections over the Grassmannian, the $\ell$-dimensional image cone of the higher-order operator is canonically embedded into the $\ell$-dimensional one of its order-reduced operator:
\[
    \mathcal I^{n-1}_{\A} \times \{0\} \subset \mathcal I^{n-1}_{\mathbb{L}}.
\]
By Theorem~\ref{thm:main}, the codimension-one wave cone of $\mathbb{L}$ is strictly trivial, meaning $\mathcal I^{n-1}_{\mathbb{L}} = \{(0,0)\}$. The subset inclusion immediately forces
\[
    \mathcal I^{n-1}_{\A} = \{0\},
\]
which demonstrates every $\C$-elliptic operator is hyperplane vanishing.
\end{proof}

\begin{proof}[Proof of Corollary \ref{cor:high}]
Combining Theorem~\ref{thm:main} and~\eqref{eq:eq} directly yields the equivalence stated in Corollary~\ref{cor:high}: $\A(D)$ is $\C$-elliptic if and only if $\mathbb{L}(D)$ is $\R$-elliptic and hyperplane vanishing.
\end{proof}

\subsection{Dimension and trace results}

Once we have established the hyperplane vanishing of complex-elliptic operators, the following arguments formally detail how the triviality of the wave cone translates into the stated dimensional and trace properties.

\begin{proof}[Proof of Corollary \ref{cor:estimates}]
Let $\A(D)$ be a complex-elliptic operator. By Theorem \ref{thm:main}, its $(n-1)$-dimensional wave cone is strictly trivial, meaning $\mathcal I^{n-1}_\A = \{0\}$. Under this exact algebraic condition, Arroyo-Rabasa, De Philippis, Hirsch, and Rindler established in \cite[Cor. 1.4]{ADHR2019} that any measure $\mu = \A u$ representing the differential constraint must satisfy the dimensional estimate $|\mu| \ll \mathscr{I}^{n-1} \ll \mathscr H^{n-1}$. The corollary follows immediately.
\end{proof}

\begin{proof}[Proof of Corollary \ref{cor:traces}]
The existence of interior jump traces on rectifiable sets follows by the very definition of an $\mathscr H^{n-1}$-rectifiable set as a countable union $\{\Gamma_i\}$ of Lipschitz graphs up to an $\mathscr H^{n-1}$-nullset $S$. On each $\Gamma_i$ one simply appeals to~\cite[Thm.~2.4]{AS2025}; the important part is to discard $S$ since $|\A(D) u|(S) = 0$ by Corollary~\ref{cor:estimates}. Further details are left to the reader.
\end{proof}

Building on the existence of these two-sided traces, we can now completely isolate and characterize the lowest-dimensional component of the singular measure.

\begin{proof}[Proof of Corollary \ref{cor:jump}]
Let $\mu = |\A(D)u|$ and denote the set of infinite density points by 
\[
A_\infty \defeq \{x \in \Omega : \theta^{*(n-1)}(\mu, x) = \infty\}.
\]
By standard geometric measure theory density theorems (see, e.g., \cite[Thm.~2.56]{AFP2000}), $\mathscr{H}^{n-1}(A_\infty) = 0$. Since Corollary~\ref{cor:estimates} establishes $\mu \ll \mathscr{H}^{n-1}$, it immediately follows that $\mu(A_\infty) = 0$, and therefore $\A u$ vanishes on $A_\infty$.

Consequently, the jump measure $\A^j u$ is strictly supported on the intermediate set 
\[
    S \defeq \{x \in \Omega : 0 < \theta^{*(n-1)}(\mu, x) < \infty\}.
\]
The set $S$ is $\sigma$-finite with respect to $\mathscr{H}^{n-1}$. By the Besicovitch--Federer projection theorem (see, e.g., \cite[Thm.~18.1]{Mat}), we can decompose $S$ up to an $\mathscr{H}^{n-1}$-null set into a disjoint union $S = \Gamma \cup U$, where $\Gamma$ is countably $\mathscr{H}^{n-1}$-rectifiable and $U$ is purely unrectifiable. Since $S$ is $\sigma$-finite, the pure unrectifiability of $U$ implies that its integral geometric measure strictly vanishes: $\mathscr{I}^{n-1}(U) = 0$. 

Invoking the optimal dimensional estimate $\mu \ll \mathscr{I}^{n-1}$ from Corollary~\ref{cor:estimates} once more, we deduce that $\mu(U) = 0$. This forces the measure to vanish entirely on the unrectifiable part, leaving $\A^j u = \A(D)u \lfloor_\Gamma$. 

Because $\Gamma$ is an $\mathscr{H}^{n-1}$-rectifiable set, we may directly apply Corollary~\ref{cor:traces} to represent the measure over $\Gamma$, yielding
\[
    \A^j u = \A(\nu)(u^+ - u^-) \, \mathscr H^{n-1} \lfloor_\Gamma.
\]
By definition, this density strictly vanishes wherever $u^+ = u^-$ (the approximate continuity points). Thus, the measure naturally concentrates exactly on the classical jump set $J_u \subset \Gamma$, concluding the proof.
\end{proof}

\subsection{Failure of the higher-order characterization} 
We provide a counterexample here showing that $\R$-ellipticity and hyperplane vanishing are insufficient to guarantee $\C$-ellipticity for operators of order higher than one:

\begin{example}\label{ex}
We construct a real-elliptic and hyperplane vanishing operator $\A(D)$ of order two ($k = 2)$  in two dimensions ($n=2$), which fails to be complex-elliptic. 
 
Define $\A(D)$ mapping from $E = \R^2$ to $F = \R^4$ by its symbol matrix:
\[
    \A(\xi) = \begin{pmatrix} \xi_1^2 - \xi_2^2 & 2\xi_1\xi_2 \\ - 2\xi_1\xi_2 & \xi_1^2 - \xi_2^2 \\ \xi_1^2 + \xi_2^2 & 0 \\ 0 & \xi_1^2 + \xi_2^2 \end{pmatrix}.
\]

Clearly, $\A(D)$ is real-elliptic because its lower $2 \times 2$ block is the symbol of the Laplacian operator. 
Now, we complexify the frequency variables to $\zeta = (\zeta_1, \zeta_2) \in \C^2$ and evaluate the symbol at $\zeta = (1, i)$. Substituting $\zeta_1 = 1$ and $\zeta_2 = i$ yields:
\[
    \A(1, i) = \begin{pmatrix} 2 & 2i \\ -2i & 2 \\ 0 & 0 \\ 0 & 0 \end{pmatrix} \quad \implies \quad \A(1, i)\begin{pmatrix} -i\\ 1 \end{pmatrix} = 0.
\]
Therefore, $\A(D)$ is not complex-elliptic.

For $n=2$, the wave cone is the intersection of the images over all non-zero frequencies. Applying double-angle identities, the symbol matrix simplifies to:
\[
    \A\left(\cos \frac{\theta}{2}, \sin \frac{\theta}{2}\right) = \begin{pmatrix} M_\theta  \\
    I_{2 \times 2}\end{pmatrix} \defeq \begin{pmatrix} \cos(\theta) & \sin(\theta) \\ -\sin(\theta) & \cos(\theta) \\ 1 & 0 \\ 0 & 1 \end{pmatrix}.
\]
Suppose a constant vector $W = (W_1, W_2, W_3, W_4)^T \in \R^4$ belongs to the wave cone $\mathcal I^1_{\A}$. In particular $(W_1,W_2)^T \in \bigcap_{M \in \mathrm{SO}(2)} \im M = \{(0,0)\}.$
On the other hand, for every angle $\theta$, there must exist an input vector $u_\theta = (a_\theta, b_\theta)^T \in \R^2$ such that $\A(\cos\theta, \sin\theta)u_\theta = W$.  The bottom two rows of this linear system dictate that $a_\theta \equiv W_3$ and $b_\theta \equiv W_4$. Evaluating at $\theta = 0$, 
\begin{align*}
\begin{pmatrix} 0 \\ 0\end{pmatrix} = \begin{pmatrix} W_1 \\ W_2\end{pmatrix} =   M_0 \begin{pmatrix} W_3 \\ W_4\end{pmatrix} =  \begin{pmatrix} W_3 \\ W_4\end{pmatrix} \quad \Rightarrow \quad W = 0.
\end{align*}
Therefore $\A(D)$ is vanishing on hyperplanes.
\end{example}

\begin{remark}
The failure of the hyperplane-vanishing characterization is related (but not equivalent) to the fact that ellipticity and Van Schaftingen's canceling property are not enough to characterize complex-ellipticity for higher order operators in dimension $n=2$, an observation already noted by Gmeineder and Raita (see~\cite[Prop.~1.2]{GR2019}).
\end{remark}

\section{Plane waves and rigidity}\label{sec:rigidity}

In this section, we establish the explicit plane-wave decomposition of measures constrained by complex-elliptic operators, presenting the proofs of Lemma~\ref{lem:rigidity} and Corollary~\ref{cor:pure_tangents}. The overarching strategy is to transform the PDE constraint into a homogeneous algebraic system where the severe twisting of the complex-elliptic symbol rigidly forces the solutions into purely one-dimensional profiles. 

We rely on the natural differential annihilator (see~\cite{VS2013}) of $\A(D)$, which is given by an operator $\Q(D)$ with constant real coefficients making the following sequence exact:
\[
0 \longrightarrow E \xrightarrow{\A(\xi)} F \xrightarrow{\Q(\xi)} G \longrightarrow 0
\]
for all frequencies $\xi \in \C^n \smallsetminus \{0\}$, where $G$ is a finite dimensional Euclidean space. For a fixed vector $P \in F$, we consider the $G$-valued operator acting on scalar-valued distributions by
\[
	\B(D)\varphi \coloneqq \Q(D)[P\varphi], \qquad \varphi \in \mathscr D'(\Omega). 
\]

By assumption, $u$ satisfies the differential constraint $\A(D) u = P \sigma$ in $\Omega$, where $\sigma$ is a Radon measure. Applying the annihilator yields:
\[
    \B(D)\sigma = \Q(D)[P \sigma] = \Q(D)\A(D)u = 0
\]
in the sense of distributions on $\Omega$. 

The complex algebraic variety associated with the annihilator is
\[
	V_P = \{\xi \in \C^n \mid \B(\xi) = 0\} = \{ \xi \in \C^n \mid P \in \im \A(\xi) \}.
\]
Because $\A(D)$ is complex-elliptic, Theorem~\ref{thm:main} ensures that the projective variety associated with $V_P$ is finite (dimension zero). Thus, the affine variety $V_P$ must either be empty or consist of a finite union of complex lines passing through the origin. Let us assume $V_P = \ell_1 \cup \dots \cup \ell_r$. By B\'ezout's theorem (see, e.g., \cite[Thm.~18.3]{Harris1992}), the degree of this projective variety---and hence the maximum number of distinct complex lines $r$---is strictly bounded by the product of the degrees of the polynomials defining $\B(\xi)$. Consequently, $r$ is an integer depending strictly on the algebraic symbol of $\A$ (independently of $P$). For physical relevance, we assume the lines are generated by real directional vectors $\zeta_j \in S^{n-1}$, that is $\ell_j = \mathrm{span}_\C \{\zeta_j\}$.

We break the proof into three main steps: first, utilizing the classical Ehrenpreis-Palamodov principle to decompose the measure algebraically into distributional components supported on characteristic lines; second, applying a geometric slicing argument to upgrade the constituent transversal profiles from distributions to Radon measures; and finally, using Smith's exact differential inverse to transfer this plane-wave decomposition back onto $u$ and deduce its $BV_{loc}$ regularity.

\begin{proof}[Proof of Lemma \ref{lem:rigidity}] \phantom{.}

\medskip

\emph{Step 1: Algebraic decomposition via the Ehrenpreis-Palamodov principle.} \\
Let $J = \langle \B_1(\zeta), \dots, \B_O(\zeta) \rangle \subset \C[\zeta]$ be the ideal generated by the rows of $\mathbb B$. By construction, the algebraic variety (the common zero set) associated with $J$ is the union of distinct lines $V_P = \ell_1 \cup \dots \cup \ell_r$. 
By the Lasker-Noether theorem, $J$ admits a minimal \emph{primary decomposition} $J = \bigcap_{j=1}^r \mathfrak{q}_j$. This decomposition admits a direct PDE interpretation: each \emph{primary ideal} $\mathfrak{q}_j$ corresponds to a sub-system of PDEs supported on a single irreducible geometric component of the variety---in this case, the line $\ell_j$. The \emph{radical} of $\mathfrak{q}_j$, denoted $\sqrt{\mathfrak{q}_j}$, is the \emph{prime ideal} $\mathfrak{p}_j = I(\ell_j)$ consisting of all polynomials that strictly vanish on $\ell_j$. 

Since $\Omega$ is a convex set of $\R^n$, in particular it is $\mathbf B$-convex in the sense of Palamodov (see~\cite[Ch. VII \S8]{Palamodov1970}). Hence, we may apply the Ehrenpreis--Palamodov representation principle (see~\cite[Ch. VI, Thm.~1]{Palamodov1970}) to discover that $\sigma$ splits uniquely into a finite sum of distributions
\[
    \sigma = \sum_{j=1}^r T_{j},
\]
where each component $T_{j} \in \mathscr D'(\Omega)$ satisfies $q(D)T_{j} = 0$ for every polynomial $q \in \mathfrak{q}_j$. 
By Hilbert's Nullstellensatz, there exists an integer $N \ge 0$ such that $\mathfrak{p}_j^N \subset \mathfrak{q}_j$, for each $j = 1,\dots,r$. Since $\mathfrak{p}_j$ is generated by the transverse directions $\{v \cdot \zeta \mid v \in \ell_j^\perp\}$, it follows that $(v \cdot D)^N T_{j} = 0$. This dictates that $T_{j}$ must be a polynomial of degree at most $N-1$ in the directions transverse to $\ell_j$, with distributional one-dimensional profiles along $\ell_j$. 

More precisely, relying on the rotated domain $\Omega_{\zeta_j} \defeq R_{\zeta_j}^{-1}(\Omega)$ introduced in Definition~\ref{def:plane_waves}, we can express each term as
\[
    T_{j} = \sum_{0 \le |\alpha| < N} (R_{\zeta_j})_\# \big(g_{j,\alpha} \otimes (y^\alpha \mathscr L^{n-1})\big) \lfloor_\Omega,
\]
where $y$ denotes the transversal variables in $\R^{n-1}$, $\alpha$ is a multi-index, and each $g_{j,\alpha}$ is a 1D distribution defined over the 1D projection of $\Omega_{\zeta_j}$. Moreover, the decomposition above is unique. 

\begin{example}[Transversal components]
    To briefly illustrate the distinction between $\mathfrak{q}_j$ and its radical $\mathfrak{p}_j$, consider a smooth function $f$ on $\R^2$ satisfying $\partial_{x_2}^2 f = 0$. The associated ideal is $J = \langle \zeta_2^2 \rangle \subset \C[\zeta_1, \zeta_2]$, which is primary ($\mathfrak{q} = \langle \zeta_2^2 \rangle$). Its geometric zero set is the $\zeta_1$-axis, $\ell = \{\zeta_2 = 0\}$, and its prime ideal is $\mathfrak{p} = \langle \zeta_2 \rangle$. Analytically, the PDE associated to $\mathfrak{p}$ is $\partial_{x_2} f = 0$, yielding purely longitudinal 1D plane waves $f(x_1, x_2) = g(x_1)$. The PDE associated to $\mathfrak{q}$ is $\partial_{x_2}^2 f = 0$, whose solutions are $f(x_1, x_2) = g(x_1) + x_2 h(x_1)$.
\end{example} 

\emph{Step 2: Upgrading distributions to measures via transversal slicing.} \\
We now establish that the 1D distributions $g_{j,\alpha}$ are in fact Radon measures. Fix an index $j_0$ (without loss of generality, $j_0 = 1$). We will isolate the distributions on $\ell_1$ using a local slicing argument.

Let $Q \subset \Omega$ be a small cube centered at $x_0$ chosen such that its rotated counterpart $Q_{\zeta_1} = R_{\zeta_1}^{-1}(Q) \subset \Omega_{\zeta_1}$ has sides parallel to the split coordinate axes $\R \times \R^{n-1}$. Choose a family of compactly supported, smooth test functions $\{\psi_\beta\} \subset C_c^\infty(\R^{n-1})$ localized strictly within the transversal projection of $Q_{\zeta_1}$ such that for all multi-indices $0 \le |\alpha|, |\beta| < N$ it holds
\begin{equation}
    \int_{\R^{n-1}} y^\alpha \psi_\beta(y) \, dy = \delta_{\alpha, \beta}.
\end{equation}
For each $\beta$, consider the map $S_\beta:  \mathscr D'(Q) \to \mathscr D'(Q \cap \{\ell_1 + x_0\})$ given by 
\[
    S_\beta T(\varphi) = T(\psi_\beta \otimes \varphi), \qquad \varphi \in C_c^\infty(Q \cap \{\ell_1 + x_0\}).
\] 
We test the distribution $\sigma$ locally against the transverse profiles $\psi_\beta(y)$. Because $\sigma$ is a signed Radon measure on $\Omega$, the transversal integration against a smooth test function rigidly projects it onto a signed Radon measure on the longitudinal line segment of $Q_{\zeta_1}$. Applying this projection to our decomposition of $\sigma$ yields:
\begin{equation}
    \mathcal M_{loc}(\R) \ni  S_\beta\sigma = g_{1,\beta} + \sum_{j=2}^r \sum_{|\alpha| < N} S_\beta \left( (R_{\zeta_j})_\# \big(g_{j,\alpha} \otimes (y_{\zeta_j}^\alpha \mathscr L^{n-1})\big)  \right).
\end{equation}

Let us fix $j \ge 2$, the line $\ell_j$ is strictly transverse to $\ell_1$. Geometrically, the directional vectors $\zeta_1$ and $\zeta_j$ are linearly independent. To determine the regularity of the projection, we evaluate the action of $S_\beta$ on the transverse distribution term $T_j = (R_{\zeta_j})_\# (g_{j,\alpha} \otimes (y_{\zeta_j}^\alpha \mathscr L^{n-1}))$ against a longitudinal test function $\varphi \in C_c^\infty(\R)$ localized on the 1D projection of $Q_{\zeta_1}$.

By definition, this pairing lifts to evaluating $T_j$ against the full test function $\Phi$ on $Q$, where $R_{\zeta_1}^*\Phi(t, y_{\zeta_1}) = \varphi(t)\psi_\beta(y_{\zeta_1})$ with $t = x \cdot \zeta_1$. Transforming the integral into a local linear coordinate system $(t, s, z) \in \R \times \R \times \R^{n-2}$ perfectly aligned with both characteristic directions—such that $t = x \cdot \zeta_1$ and $s = x \cdot \zeta_j$—the transversal polynomial $y_{\zeta_j}^\alpha$ maps to a smooth polynomial $P(t, s, z)$, and the transversal weight $\psi_\beta$ maps to a smooth, compactly supported function $\tilde{\psi}_\beta(t, s, z)$.

The distributional pairing can therefore be expressed as:
\begin{equation*}
    \langle S_\beta T_j, \varphi \rangle = \left\langle g_{j,\alpha}(s), \int_{\R^{n-1}} P(t, s, z) \tilde{\psi}_\beta(t, s, z) \varphi(t) \, dt \, dz \right\rangle_{s},
\end{equation*}
where the constant Jacobian of the coordinate change has been absorbed into $P$. Because we are integrating out the $n-2$ auxiliary variables $z$, we can define the \emph{smooth} partial integral
\begin{equation*}
    K(t, s) = \int_{\R^{n-2}} P(t, s, z) \tilde{\psi}_\beta(t, s, z) \, dz.
\end{equation*}
Since $\tilde{\psi}_\beta$ is smooth and compactly supported, $K(t, s)$ is strictly a $C_c^\infty(\R^2)$ function. The standard properties of distributions mapping smooth parametric test functions allow us to rewrite the pairing as:
\begin{equation*}
    \langle S_\beta T_j, \varphi \rangle = \int_\R \langle g_{j,\alpha}(\cdot), K(t, \cdot) \rangle \, \varphi(t) \, dt.
\end{equation*}
By the classical theorem on distributions acting on smooth parametric families of test functions (see, e.g., H\"ormander~\cite[Thm.~2.1.3]{hormander2003}), the parametric evaluation $t \mapsto \langle g_{j,\alpha}(\cdot), K(t, \cdot) \rangle$ strictly defines a $C^\infty$ function on the longitudinal line segment. Consequently, projecting a distribution supported along the transverse line $\ell_j$ unconditionally smooths out its singularities.

Thus, on the 1D domain, we have:
\begin{equation}
    g_{1,\beta} = S_\beta\sigma - \text{(smooth function)}.
\end{equation}
Since $S_\beta\sigma$ is a Radon measure and a smooth function acts as an absolutely continuous Radon measure, their difference $g_{1,\beta}$ must itself be a Radon measure. By symmetry, $g_{j,\alpha}$ is a Radon measure for all $j$ and $\alpha$.

Because each $g_{j,\alpha}$ is a one-dimensional Radon measure on $\R$, its cumulative distribution function $w_{j,\alpha}(s) = g_{j,\alpha}((-\infty, s))$ is a function of bounded variation $BV_{loc}(\R)$, and we can write $g_{j,\alpha} = w_{j,\alpha}'$ in the distributional sense. 
This establishes the rigorous representation 
\[
    \sigma = \sum_{j=1}^r \sum_{|\alpha| < N} (R_{\zeta_j})_\# \big(w_{j,\alpha}' \otimes (p_{j,\alpha} \mathscr L^{n-1})\big) \lfloor_\Omega.
\]

\emph{Step 3: Plane-wave decomposition and regularity of $u$.} \\
Because $\A(D)$ is $\C$-elliptic, its symbol $\A(\xi)$ possesses no non-trivial zeros in $\C^n$. By an application of Hilbert's Nullstellensatz exactly as shown by Smith \cite[p.~57]{Smith70}, the ideal generated by the matrix entries of $\A(\xi)$ must contain all homogeneous polynomials of some sufficiently high degree. Therefore, there exists an integer $m \ge 1$---dictated entirely by the algebraic properties (degree and coefficients) of $\A$---and a homogeneous constant-coefficient differential operator $H(D)$ such that the operator identity holds:
\[
    \nabla^m = H(D)\A(D).
\]
Applying this differential identity to our constraint $\A(D)u = P\sigma$ yields $\nabla^m u = H(D)[P \sigma]$. 
Since $\sigma$ decomposes entirely into a finite sum of transversally polynomial plane waves along the characteristic directions $\zeta_j$, applying the differential operator $H(D)P$ to $\sigma$ produces another sum of similar plane wave distributions precisely along the same directions. Because every partial derivative of $u$ of order $m$ is then also of this form, $u$ itself must also decompose into a sum of transversally polynomial plane waves along these directions (albeit the profiles may not be measures), up to an additive polynomial of degree at most $m-1$. 

Thus, we may express $u$ exactly as:
\[
    u = Q_{m-1}(x) + \sum_{j=1}^r u_j, \qquad u_j = \sum_{\beta} (R_{\zeta_j})_\# \big(v_{j,\beta} \otimes (q_{j,\beta} \mathscr L^{n-1})\big) \lfloor_\Omega,
\]
where $Q_{m-1}$ is a polynomial of degree at most $m-1$ and $v_{j,\beta}$ are 1D distributional profiles taking values in $E$. 

To establish the $BV_{loc}$ regularity of these profiles, we isolate the components along each characteristic line $\ell_j$ (as done in Step 2 via algebraic independence of the polynomials), yielding the localized equality $\A(D)u_j = P\sigma_j$. Under the coordinate splitting $x = R_{\zeta_j}(t,y)$, the operator decomposes geometrically as $\A(D) = \A(\zeta_j)\partial_t + \A_{\perp j}(\nabla_y)$. 

Let $N_j$ be the maximum transversal degree inside the plane-wave $u_j$. The highest transversal degree resulting from applying $\A(D)$ to $u_j$ is generated exclusively by the longitudinal derivative $\partial_t$ (since the transversal derivatives $\nabla_y$ necessarily lower the polynomial degree):
\[
    \A(D) u_j = \sum_{|\beta|=N_j} (R_{\zeta_j})_\# \big(\A(\zeta_j) v'_{j,\beta} \otimes (q_{j,\beta}\mathscr L^{n-1})\big) + (\text{lower transversal degree}).
\]
Equating this to $P\sigma_j$ and isolating the terms of maximal degree $N_j$ through the linear independence of polynomials, we obtain the system of equations:
\[
    \A(\zeta_j) v'_{j,\beta} = P g_{j,\beta},
\]
for the associated measures $g_{j,\beta}$ with $|\beta|=N_j$ arising from the decomposition of $\sigma_j$.

Fix $\beta$ such a multi-index with $|\beta| = N_j$. By hypothesis, $P \in \im \A(\zeta_j)$. Because $\A(D)$ is $\C$-elliptic, the matrix $\A(\zeta_j)$ is strictly injective. Consequently, there exists a unique vector $B_j \in E$ such that $\A(\zeta_j)B_j = P$. This forces the distributional relation $v'_{j,\beta} = B_j g_{j,\beta}$. Since $g_{j,\beta}$ is a Radon measure, its anti-derivative $v_{j,\beta}$ strictly belongs to $BV_{loc}(\R, E)$.

We proceed by downward induction on the transversal degree $k$ from $N_j$ down to $0$. Suppose we have determined that the profiles $v_{j,\gamma}$ for $|\gamma| > k$ belong strictly to $BV_{loc}(\R, E)$. To solve for the profiles with $|\beta| = k$, we subtract the higher-degree terms from both sides of the localized equality. Because the previously determined profiles are $BV_{loc}$, their transversal derivatives $\A_{\perp j}(\nabla_y)$ yield lower-degree polynomial plane waves whose coefficients are exactly Radon measures (arising from first-order distributional derivatives of $BV_{loc}$ profiles). By linear independence of the transversal polynomials, projecting the remaining equation onto the degree-$k$ components yields a system of the form $\A(\zeta_j)v'_{j,\beta} = \tilde{g}_{j,\beta}$, where $\tilde{g}_{j,\beta}$ is an $F$-valued Radon measure strictly residing in $\im \A(\zeta_j)$. The strict injectivity of $\A(\zeta_j)$ allows us to algebraically invert this to $v'_{j,\beta} = \A(\zeta_j)^{-1}\tilde{g}_{j,\beta}$, deducing that every individual profile is the anti-derivative of a measure. 

Therefore, $u$ is fully described by a finite sum of transversally polynomial plane waves possessing purely $BV_{loc}(\R,E)$ profiles. 
\end{proof}

While the preceding lemma establishes that the local structure of the measure is governed by polynomial plane waves, these transversal polynomials are essentially artifacts of the differential constraints' higher-order compatibilities. The following proof demonstrates that an iterated blow-up process systematically purges these polynomial variations, ultimately isolating a pure plane wave.

\begin{proof}[Proof of Corollary \ref{cor:pure_tangents}]
Let $x \in \Omega$ be a point such that the set of tangents $\Tan_{BV^\A}(u, x)$ is non-empty, and let $v \in \Tan_{BV^\A}(u, x)$. A foundational principle of geometric measure theory (see, e.g., Preiss~\cite{Preiss1987}) establishes that tangent maps of tangent maps remain tangent maps of the original function. Therefore, any tangent of $v$ belongs to $\Tan_{BV^\A}(u, x)$.

We distinguish two cases based on the structure of $v$:\\

\emph{Case 1: $\A v$ possesses a regular point.} If $v$ is not purely singular, there exists a Lebesgue point $y_0$ of $v$. Performing a secondary blow-up of $v$ specifically centered at $y_0$:
\[
    \tilde{v}_{\rho}(y) = \tilde{c}_{y_0, \rho} \left( v(y_0 + \rho y) - \fint_{B_1(0)} v(y_0 + \rho z) \, dz \right)
\]
will weakly-* converge to a constant tangent map $\tilde{v}(y) \equiv c$. A constant map trivially qualifies as a pure plane wave $g(y \cdot \zeta_0)$ for any characteristic direction $\zeta_0$.\\

\emph{Case 2: $\A v$ is purely singular.} 
By the previous discussion, $v \in BV_{loc}(\R^n, E)$ and the associated tangent measure $\mu = \A(D)v$ takes the form $\mu = P_0 \lambda$ for some positive scalar measure $\lambda$ and a fixed vector $P_0 \in F$. By Lemma~\ref{lem:rigidity}, the measure $\mu$ decomposes exactly as a finite sum of characteristic components $\mu = \sum_{j=1}^r \mu_j$, where each $\mu_j$ is supported on families of hyperplanes orthogonal to the distinct characteristic directions $\zeta_j$. Let $S_j$ be a full $\mu_j$-measure set for each $j = 1,\dots,r$. Because the directions $\zeta_j$ are pairwise distinct, the intersection of any two distinct $S_i \cap S_j$ has $\mathscr H^{n-1}$-measure zero. In particular, the intersection $\bigcup_{i \neq j} (S_i \cap S_j)$ has $\mathscr{H}^{n-1}$-measure zero. 

Crucially, by Corollary~\ref{cor:estimates}, the measure $\mu = \A(D)v$ is absolutely continuous with respect to $\mathscr{H}^{n-1}$ (since $\A$ is complex-elliptic). Therefore, $\mu$ assigns strictly zero mass to the intersection of these singular supports:
\[
    |\mu|\left( \bigcup_{i \neq j} (S_i \cap S_j) \right) = 0.
\]
Since $\A v$ is non-trivial, $|\mu| > 0$. Thus, there must exist a Lebesgue point $y_0 \in B_1$ of $\A v$ that belongs to exactly \emph{one} $S_j$. Without any loss of generality, we may assume $j = 1$ so that $y_0$ is also a Lebesgue density point of $\A v \llcorner S_1$ where the tangent cone is well defined, and hence
\[
    \Tan(\mu_1, y_0) = \Tan(\A u, y_0) \subset \Tan(\A u, x).
\]
By construction, $\mu_1$ is a transversally polynomial plane wave measure. However, as we blow up at the fixed center $y_0$, the spatial coordinates scale as $y_0 + \rho y$. In the infinitesimal limit $\rho \to 0^+$, any transverse polynomial unconditionally evaluates to a constant scalar $q_\beta(y_{0})$. This geometric collapse forces the blow-up $w$ of $v$ at $y_0$ to lose all transverse spatial dependence, yielding a pure plane wave measure: 
\[
    \A w(dy) = P_0 g(y \cdot \zeta_1).
\]
for some measure $g$ on $[-1,1]$. If $g$ has a non-trivial regular point, we can simply argue as in Case 1. We may hence assume that $\A w$ is purely singular. 

Arguing similarly as in the proof of Lemma~\ref{lem:rigidity}, we discover that 
\[
    w(y) = Q_{m-1}(y) + b_0 h(y \cdot \zeta_1)
\]
where $Q_{m-1}$ is an $\A$-free polynomial of degree less than or equal to $m-1$ and $\A(\zeta_1)b_{0} = P_0$. Since $\A w$ is singular, the smooth additive polynomial $Q_{m-1}$ must be strictly constant (otherwise $\A w$ would possess a regular point), confirming $w$ is exactly a pure plane wave map.
\end{proof}

\section{Examples and characteristic directions}\label{sec:examples}
To provide concrete illustrations of the rigidity phenomena dictated by Lemma~\ref{lem:rigidity}, we evaluate the image cones, the characteristic bound $r$, and the specific characteristic directions $\zeta_j$ for several canonical first-order $\C$-elliptic operators commonly studied in the calculus of variations (see, e.g., \cite{AS2025} for further related geometric examples). 

In what follows, we fix a non-zero polar vector $P \in \mathcal I_\A$. In light of Theorem~\ref{thm:main}, we may restrict our study exclusively to the real algebraic variety:
\[
    V^\R_P = \{ \zeta \in \R^n \setminus \{0\} : P \in \im \A(\zeta) \}.
\]
The integer $r$ exactly dictates the maximal number of distinct real lines contained in $V^\R_P$, which directly correspond to the active characteristic directions bounding the plane-wave expansions.

\subsection{The Gradient Operator}
Consider the standard gradient operator on vector-valued fields $u : \R^n \to \R^m$. Here, $E = \R^m$, $F = \R^{m \times n}$, and $\A(D)u = Du$. 
The principal symbol acting on a real vector $a \in \R^m$ with frequency $\zeta \in \R^n$ is given by
\[
    \A(\zeta)a = a \otimes \zeta.
\]
The operator is strictly $\C$-elliptic, since $a \otimes \zeta = 0$ implies either $a = 0$ or $\zeta = 0$. By definition, the image cone $\mathcal I_D$ consists precisely of all rank-1 matrices $a \otimes \zeta_0 \in \R^{m \times n}$.

Let $P = a \otimes \zeta_0 \in \mathcal I_D$ with $a \neq 0$ and $\zeta_0 \neq 0$. To find the characteristic lines, we solve $P \in \im \A(\zeta)$ for real vectors:
\[
    a \otimes \zeta_0 = b \otimes \zeta \quad \text{for some } b \in \R^m.
\]
To isolate $\zeta$, we can contract this matrix equation. Let $w \in \R^m$ be an arbitrary vector such that $w \cdot a = 1$. Contracting on the left yields
\[
    \zeta_0 = (w \cdot b)\zeta.
\]
Since $\zeta_0 \neq 0$, it must be that $w \cdot b \neq 0$. This immediately forces $\zeta = \frac{1}{w \cdot b}\zeta_0$. Consequently, any real frequency $\zeta \in V^\R_P$ is strictly a scalar multiple of $\zeta_0$. The real variety $V^\R_P$ thus contains exactly one line, $\ell_1 = \mathrm{span}_\R \{\zeta_0\}$. 

We conclude $r = 1$, and the single characteristic direction is $\zeta_0$. By Lemma~\ref{lem:rigidity}, this elegantly recovers the classical rank-one rigidity: $BV$-gradient measures locally reduce to a single pure 1D plane wave.

\subsection{The Symmetric Gradient}
Consider the symmetric gradient operator $\mathcal{E}$ acting on vector fields $u : \R^n \to \R^n$, defined by $\mathcal{E}u = \frac{1}{2}(Du + Du^T)$. Here $E = \R^n$, $F = \R_{\mathrm{sym}}^{n \times n}$, and the symbol acts as
\[
    \A(\zeta)b = b \odot \zeta \defeq \frac{1}{2}(b \otimes \zeta + \zeta \otimes b).
\]
The symmetric gradient is a classical $\C$-elliptic operator. The image cone $\mathcal I_{\mathcal E}$ is exactly the set of all real symmetric matrices of the form $a \odot \zeta_0$, which geometrically have rank at most $2$.

Let $P = a \odot \zeta_0 \in \mathcal I_{\mathcal{E}}$. If $P$ has rank 1 (meaning $a$ is parallel to $\zeta_0$), the computation collapses identically to the standard gradient, yielding $r=1$. Let us assume $P$ has rank 2, meaning $a$ and $\zeta_0$ are linearly independent. We must solve:
\[
    a \odot \zeta_0 = b \odot \zeta \quad \text{for some } b \in \R^n.
\]
Evaluating this symmetric matrix equality against an arbitrary vector $x \in \{a, \zeta_0\}^\perp \subset \R^n$ yields
\[
    0 = \frac{1}{2}\big((b \cdot x)\zeta + (\zeta \cdot x)b\big).
\]
Since $\zeta, b \neq 0$, this orthogonality condition holding for all $x \in \{a, \zeta_0\}^\perp$ forces both $\zeta$ and $b$ to reside entirely within the orthogonal complement of $\{a, \zeta_0\}^\perp$, which is the real plane $\mathrm{span}_\R\{a, \zeta_0\}$. We may therefore uniquely write $\zeta = \alpha a + \beta \zeta_0$ and $b = \gamma a + \delta \zeta_0$ for some real scalars $\alpha, \beta, \gamma, \delta$.

Substituting this expansion back into the symbol equation gives:
\[
    a \odot \zeta_0 = \alpha\gamma (a \otimes a) + \delta\beta (\zeta_0 \otimes \zeta_0) + (\alpha\delta + \beta\gamma)(a \odot \zeta_0).
\]
Because the matrices $\{a \otimes a, \zeta_0 \otimes \zeta_0, a \odot \zeta_0\}$ form a linearly independent basis for the symmetric tensors over this 2D subspace, we equate coefficients to obtain the nonlinear system:
\begin{enumerate}
    \item $\alpha\gamma = 0$
    \item $\delta\beta = 0$
    \item $\alpha\delta + \beta\gamma = 1$
\end{enumerate}
From (1), either $\alpha = 0$ or $\gamma = 0$. 
If $\alpha = 0$, then (3) forces $\beta\gamma = 1$, implying $\beta \neq 0$. By (2), $\delta\beta = 0$ forces $\delta = 0$. This yields $\zeta = \beta \zeta_0$ (and $b = \gamma a$), recovering the real line $\ell_1 = \mathrm{span}_\R\{\zeta_0\}$.
If $\gamma = 0$, an identical argument forces $\beta = 0$, yielding $\zeta = \alpha a$ (and $b = \delta \zeta_0$), recovering a second line $\ell_2 = \mathrm{span}_\R\{a\}$.

Thus, exactly two distinct real lines solve the characteristic equation, so $r = 2$. The characteristic directions driving the plane-wave measures are identically the normalized vectors $\frac{\zeta_0}{|\zeta_0|}$ and $\frac{a}{|a|}$.

\subsection{The Deviatoric Symmetric Gradient}
Consider the trace-free (deviatoric) symmetric gradient $\mathcal{E}^D u = \mathcal{E}u - \frac{1}{n}(\operatorname{div} u) I_n$, acting on $u : \R^n \to \R^n$. The symbol is given by
\[
    \A(\zeta)b = b \odot \zeta - \frac{1}{n}(b \cdot \zeta) I_n.
\]
This operator is strictly $\C$-elliptic for dimensions $n \ge 3$. (For $n=2$, the deviatoric symmetric gradient is algebraically equivalent to the Cauchy-Riemann equations, dropping rank for non-zero complex frequencies $\zeta_1 = \pm i \zeta_2$, and thus fails to be $\C$-elliptic).

Suppose $P = a \odot \zeta_0 - \frac{1}{n}(a \cdot \zeta_0) I_n \in \mathcal{I}_{\mathcal{E}^D}$ with $P \neq 0$. Setting $P = \A(\zeta)b$, we can rewrite the structural equation as
\[
    a \odot \zeta_0 - b \odot \zeta = \lambda I_n, \qquad \text{where } \lambda = \frac{1}{n}(a \cdot \zeta_0 - b \cdot \zeta).
\]
Rearranging this yields $a \odot \zeta_0 - \lambda I_n = b \odot \zeta$. We can rigorously determine $\lambda$ by analyzing the eigenvalue signature of this matrix equality over the reals. 

Because $a$ and $\zeta_0$ are real, linearly independent vectors, the rank-2 symmetric matrix $a \odot \zeta_0$ possesses exactly one strictly positive eigenvalue $\mu_1 > 0$, one strictly negative eigenvalue $\mu_2 < 0$, and the eigenvalue $0$ with multiplicity $n-2$. Consequently, the left-hand matrix $M \defeq a \odot \zeta_0 - \lambda I_n$ has eigenvalues $\mu_1 - \lambda$, $\mu_2 - \lambda$, and $-\lambda$ (the latter with multiplicity $n-2$). 

On the right-hand side, because $b, \zeta \in \R^n$ are also real vectors, the symmetric tensor $b \odot \zeta$ must similarly possess exactly one positive eigenvalue, one negative eigenvalue, and $n-2$ zeros (assuming generic rank 2). Thus, $M$ is algebraically forced to match this exact signature. 

Since $n \ge 3$, the eigenvalue $-\lambda$ appears at least once. 
If we assume $\lambda > 0$, then $M$ possesses at least two strictly negative eigenvalues: $\mu_2 - \lambda < 0$ and $-\lambda < 0$. This contradicts the signature of $b \odot \zeta$. 
Conversely, if we assume $\lambda < 0$, then $M$ possesses at least two strictly positive eigenvalues: $\mu_1 - \lambda > 0$ and $-\lambda > 0$, which is again a contradiction. 

This strict signature mismatch dictates that $\lambda$ must exactly vanish. With $\lambda = 0$, the trace-free condition forcefully degrades the system back to the unconstrained symmetric gradient equality:
\[
    a \odot \zeta_0 = b \odot \zeta.
\]
Consequently, we find that $r = 2$ universally for all dimensions $n \ge 3$, and the underlying characteristic directions governing the plane wave decomposition remain robustly coupled to the constituent vectors parallel to $a$ and $\zeta_0$.

\subsection{The Symmetric $k$-Tensor Gradient}
As a natural generalization of the symmetric gradient, consider the symmetric $k$-tensor gradient $\nabla^s$ acting on fully symmetric $k$-tensor fields $u \in \odot^k \R^n$. Here $E = \odot^k \R^n$, $F = \odot^{k+1} \R^n$. The symbol is given by the symmetric tensor product:
\[
    \A(\zeta)a = a \odot \zeta,
\]
which is strictly $\C$-elliptic since $a \odot \zeta = 0$ implies $a = 0$ or $\zeta = 0$.

Suppose $P = a \odot \zeta_0 \in \mathcal I_\A$ is a fixed polar. The characteristic lines correspond to real frequencies $\zeta \in \R^n$ satisfying
\[
    a \odot \zeta_0 = b \odot \zeta \quad \text{for some } b \in \odot^k \R^n.
\]
By evaluating this fully symmetric $(k+1)$-tensor on a single generic real vector $x \in \R^n$, we obtain an equality of real homogeneous polynomials of degree $k+1$:
\[
    a(x)(\zeta_0 \cdot x) = b(x)(\zeta \cdot x).
\]
Since the real polynomial ring $\R[x_1, \dots, x_n]$ is a unique factorization domain, the real linear form $(\zeta \cdot x)$ must strictly divide the polynomial product $a(x)(\zeta_0 \cdot x)$. As $a(x)$ is a real homogeneous polynomial of degree $k$, it can be factored into at most $k$ irreducible real linear forms (some roots may be complex conjugates, reducing the number of real linear factors). 

Consequently, the linear form $(\zeta \cdot x)$ must either be proportional to $(\zeta_0 \cdot x)$ or to one of the real linear factors embedded within $a(x)$. This strongly forces $\zeta$ to be proportional to $\zeta_0$ or to one of those at most $k$ directions. Therefore, the algebraic variety $V^\R_P$ contains at most $k+1$ real lines.

We conclude that $r \le k+1$ (with equality holding generically when $a$ splits into $k$ distinct real linear factors, none of which align with $\zeta_0$). This example illustrates that the number of characteristic directions $r$ has no universal dimensional bound and can grow arbitrarily large depending entirely on the tensorial order of the operator.

\subsection{A Polynomial Multiplication Operator}
Consider the operator mapping $u : \R^n \to \R^N$ to a vector field in $\R^{n+N-1}$ given by the convolutions of their components:
\[
    (\A(D)u)_m = \sum_{j+l=m+1} \partial_j u_l, \qquad m = 1, \dots, n+N-1.
\]
This operator, discussed as Example 19 in \cite{AS2025}, elegantly translates PDE symbols directly into univariate polynomial algebra. To see this, we identify vectors $\zeta \in \R^n$ and $b \in \R^N$ with the real coefficients of auxiliary polynomials in a dummy variable $t$:
\[
    P_\zeta(t) = \sum_{j=1}^n \zeta_j t^{j-1}, \qquad P_b(t) = \sum_{l=1}^N b_l t^{l-1}.
\]
Under this isomorphism, the symbol $\A(\zeta)b$ is exactly the standard multiplication of these two polynomials:
\[
    P_{\A(\zeta)b}(t) = P_\zeta(t) P_b(t).
\]
The operator is unconditionally $\C$-elliptic because the polynomial ring $\C[t]$ is an integral domain; $P_\zeta(t)P_b(t) = 0$ implies either $P_\zeta = 0$ or $P_b = 0$. 

If we fix a polar $P_C \in \mathcal I_\A$ generated by real polynomials $P_C(t) = P_{\zeta_0}(t)P_a(t)$, the characteristic lines correspond to solving:
\[
    P_C(t) = P_\zeta(t) P_b(t) \quad \text{for real coefficients } \zeta \in \R^n, b \in \R^N.
\]
Finding a valid real frequency vector $\zeta \in \R^n \setminus \{0\}$ is algebraically equivalent to selecting a strictly real polynomial factor $P_\zeta(t)$ of degree exactly $n-1$ that divides $P_C(t)$. 

Since $P_C(t)$ has degree at most $n+N-2$, constructing a real divisor $P_\zeta(t)$ strictly amounts to choosing a subset of roots from $P_C(t)$—where any complex roots must be selected in conjugate pairs to ensure the coefficients $\zeta$ remain real—such that their combined degree is exactly $n-1$. Therefore, the number of distinct real lines $\ell = \mathrm{span}_\R\{\zeta\}$ solving the equation is strictly bounded by the binomial coefficient for choosing these real components:
\[
    r \le \binom{n+N-2}{n-1}.
\]

\end{document}